\newtheorem{thm}{Theorem}
\newtheorem{lem}[thm]{Lemma}
\theoremstyle{definition}
\newtheorem{defn}[thm]{Definition}
\newtheorem{rmk}[thm]{Remark}
\newtheorem{exmp}[thm]{Example}
\newcommand{\CPb}{\overline{\mathbb{CP}}{}^{2}}
\newcommand{\CP}{{\mathbb{CP}}{}^{2}}
\newcommand{\Z}{\mathbb{Z}}
\newcommand{\scparallel}{{\scriptscriptstyle \parallel}}
\title[Genus two Lefschetz fibrations with $b^{+}_{2}=1$ and ${c_1}^{2}= 1 , 2$]
{Genus two Lefschetz fibrations with $b^{+}_{2}=1$ and ${c_1}^{2}= 1, 2$} 
\begin{document}

\author{Anar Akhmedov}
\address{School of Mathematics, 
University of Minnesota, 
Minneapolis, MN, 55455, USA}
\email{akhmedov@math.umn.edu}

\author{Naoyuki Monden}
\address{Department of Engineering Science, 
Osaka Electro-Communication University, 
Hatsu-cho 18-8, Neyagawa, 572-8530, Japan}
\email{monden@isc.osakac.ac.jp}

\date{June 15, 2015}

\subjclass[2000]{Primary 57R55; Secondary 57R17}

\keywords{symplectic 4-manifold, Lefschetz fibration, mapping class group, lantern relation, rational blowdown}

\begin{abstract} In this article we construct a family of genus two Lefschetz fibrations $f_{n}: X_{\theta_n} \rightarrow \mathbb{S}^{2}$ with $e(X_{\theta_n})=11$, $b^{+}_{2}(X_{\theta_n})=1$, and $c_1^{2}(X_{\theta_n})=1$ by applying a single lantern substitution to the twisted fiber sums of Matsumoto's genus two Lefschetz fibration over $\mathbb{S}^2$. Moreover, we compute the fundamental group of $X_{\theta_n}$ and show that it is isomorphic to the trivial group if $n = -3$ or $-1$, $\mathbb{Z}$ if $n =-2$, and $\mathbb{Z}_{|n+2|}$ for all integers $n\neq -3, -2, -1$. Also, we prove that our fibrations admit $-2$ section, show that their total space are symplectically minimal, and have the symplectic Kodaira dimension $\kappa = 2$. In addition, using the techniques developed in \cite{A, AP1, ABP, AP2, AZ, AO}, we also construct the genus two Lefschetz fibrations over $\mathbb{S}^2$ with $c_1^{2} = 1, 2$ and $\chi = 1$ via the fiber sums of Matsumoto's and Xiao's genus two Lefschetz fibrations, and present some applications in constructing exotic smooth structures on small $4$-manifolds with $b^{+}_{2} = 1$ and $b^{+}_{2} = 3$.
\end{abstract}

\maketitle

\section{Introduction}

The Lefschetz fibrations are fundamental objects to study in $4$-dimensional topology and they serve as a powerful tool for understanding the geometry and topology of symplectic $4$-manifolds. It was shown by S. Donaldson that every closed symplectic $4$-manifold admits a Lefschetz pencil, which can be blown up at its base points to yield a Lefschetz fibration \cite{D1}, and conversely, R. Gompf has shown that the total space $X$ of a genus $g$ Lefschetz fibration admits a symplectic structure, provided that the homology class of the regular fiber is nonzero in $H_{2}(X, \mathbb{R})$ \cite{GS}.  

Let us recall from \cite{GS} that, given a Lefschetz fibration $f: X \rightarrow \mathbb{S}^2$, one can associate to it a word $W_{f} = 1$ in the mapping class group of the regular fiber composed only from right-handed Dehn twists. Conversely, given such a right handed Dehn twist factorization $t_{\gamma_1} t_{\gamma_2} \cdots t_{\gamma_n} = 1$ in the mapping class group $M_{g}$ of the closed orientable surface of genus $g$, one can explicitly construct genus $g$ Lefschetz fibration over $\mathbb{S}^2$ with the given monodromy.

In recent literature (\cite{EG, EMVHM, AP, AO, AZ, AMon}), several authors have applied various relations in the mapping class group or Luttinger surgery to construct new Lefschetz fibrations with $1 \leq b^{+}_{2} \leq 3$ and $c_1^{2} \geq 0$. For example, the exotic genus two Lefschetz fibrations with $b^{+}_{2} = 3$ and $0 \leq c_1^{2} \leq 4$ obtained via lantern relations can be found in \cite{AP}. The Lefschetz fibrations over $\mathbb{S}^2$ with $b^{+}_{2} = 1, 3$ and  $c_1^{2} = 0$, and with various abelian fundamental groups, obtained via Luttinger surgery on the fiber sum of $E(1)$ with $\mathbb{T}^2 \times \mathbb{T}^2$, can be found in \cite{AZ, AO}. However, the exotic symplectic $4$-manifolds obtained in \cite{EMVHM, AP, AO, AMon} do not have very small topology as one may desire. Some Lefschetz fibrations constructed in \cite{EG, AMon} have  $b^{+}_{2} = 1$, but with ${c_1}^{2} \leq 0$, and it was left open in \cite{EG, AMon} whether total spaces of these Lefschetz fibrations are exotic symplectic $4$-manifolds. Thus, it is an interesting problem to construct the Lefschetz fibrations with $b^{+}_{2} = 1$ and $c_1^{2} > 0$ (with total space being non-standard or exotic symplectic $4$-manifolds), which was our main motivation for writing this article. Our other motivations comes from the study of genus two Lefschetz fibration with small number of singular fibers, and the relationship between the minimality and the fiber-sum decomposability of Lefschetz fibrations (see the Remark~\ref{decomp}). 

In this artcle, we will consider the lantern relation, a particularly well understood relation in the mapping class group. It was shown in \cite{EG} that the lantern substitution corresponds to the symplectic rational blowdown surgery along $-4$ sphere. We will first construct a family of genus two Lefschetz fibrations $f_{n}: X_{\theta_n} \rightarrow \mathbb{S}^{2}$ with $e(X_{\theta_n})=11$, $b^{+}_{2}(X_{\theta_n})=1$, $c_1^{2}(X_{\theta_n})=1$, and $\pi_1(X_{\theta_n}) \cong \mathbb{Z}_{|n+2|}$ (for any integer $n$) from the twice symplectic fiber sum of Matsumoto's well known genus two Lefschetz fibration with the monodromy $\iota^{2} = (B_0B_1B_2B_3)^2 = 1$, obtained by factorizing the class of vertical involution $\iota$ of the genus two surface with two fixed points in the mapping class group $M_2$ of the closed orientable surface of genus $2$ (see \cite{Ma}). More precisely, our genus two Lefschetz fibrations $f_{n}: X_{\theta_n} \rightarrow \mathbb{S}^{2}$ will be obtained by applying the conjugations and a single lantern substitution to the word $(\iota^{2})^{2} = (B_0B_1B_2B_3)^4 = 1$ in $M_2$. Furthemore, we will show that the total spaces of our Lefschetz fibraions are minimal symplectic $4$-manifolds of general type (i.e. with the symplectic Kodaira dimension $\kappa = 2$), and prove that our fibration admits $-2$ section. Note that in the cases, when the fundamental group of $X_{\theta_n}$ is trivial (i.e. when $n = -3, -1$), the total space of our Lefschetz fibration is an exotic copy of $\CP\#8\CPb$. For $n \notin \{-7, -6, -5, -3, -1, 0, 1, 2, 3\}$, the total space $X_{\theta_n}$ is non-K\"{a}hler symplectic $4$-manifold. This follows from the restriction on the fundamental group of minimal K\"{a}hler surfaces with the invariants $c_1^{2} = \chi = 1$.

In the last section of our article, we also construct a family of Lefschetz fibrations over $\mathbb{S}^2$ via the symplectic fiber sums of Matsumoto's and Xiao's \cite{Xiao} genus two Lefschetz fibrations with $c_1^{2} = 1, 2$, $\chi = 1$ and with various abelian fundamental groups, provide constructions of small exotic Lefschetz fibrations with $b_{2}^{+}=1, 3$, and give further applications for constructing exotic smooth structures on small $4$-manifolds. In the cases, when the fundamental group of our fibrations are trivial, the total space of our Lefschetz fibrations are exotic copies of $\CP\#7\CPb$, $\CP\#8\CPb$, $3\CP\#12\CPb$, $3\CP\#13\CPb$, $3\CP\#14\CPb$, and $3\CP\#15\CPb$. Our construction method and building blocks are similar to the ones \cite{A, AP1, ABP, AP2, AZ, AO}, but our constructions requires a good understanding of Xiao's beautiful genus two Lefschetz fibration in \cite{Xiao}, which we present in Example~\ref{XF}. In the sequel \cite{AM}, we construct higher genus Lefschetz fibrations over $\mathbb{S}^{2}$, using the lantern, daisy and other relations in the mapping class group.

The organization of our paper is as follows. In the next sections, we recall some preliminary definitions and results to be used in the sequel. In Section 3,  we state and prove some of our main results, which are Theorem \ref{thm}, Theorem \ref{thm1}, Theorem \ref{thm2}, and make some remarks. In Section~\ref{section}, we prove the existence of $-2$ sections of our Lefschetz fibrations, which is useful for the fundamental group computations. In the final Section~\ref{twisted}, we construct a family of Lefschetz genus two fibrations over $\mathbb{S}^2$, with the invariants $c_1^{2} = 1, 2$, $\chi = 1$, and the fundamental groups listed, via the symplectic fiber sums of Matsumoto's and Xiao's genus two Lefschetz fibrations over $\mathbb{S}^2$. We also the constructions of small genus two Lefschetz fibrations over $\mathbb{S}^2$ with $b_2^+ = 3$ and $b_2^- = 12, 13, 14, 15$ with a trivial fundamental groups, and provide new constructions of exotic smooth structures on $\CP\#4\CPb$ and $3\CP\#6\CPb$ using ideas of \cite{AP2}, where such examples were first constructed by the first author and D. Park.

\section{Preliminaries}

In this section,  we collect some preliminary definitions and results from \cite{AMon} concerning the mapping class groups, lantern relations and lantern substitutions, Lefschetz fibrations, some details on Matsumoto's genus two Lefschetz fibration on $\mathbb{T}^{2} \times \mathbb{S}^{2}\,\#4\CPb$ over $\mathbb{S}^{2}$ from \cite{Mar, GS, OS} and on Xiao's genus two fibration over $\mathbb{S}^{2}$ \cite{Xiao}, the symplectic Kodaira dimension and the symplectic minimality following \cite{Li2006, li, U, Dor}. Although, we are only interested in genus two Lefschetz fibrations in this article, it will be convenient to state the following definitions in a general setting.

 \subsection{Mapping Class Groups} Let $\Sigma_{g}^k$ be an oriented $2$-dimensional, compact, and connected surface of genus $g$ with $k$ boundary components. 
Let $Diff^{+}\left( \Sigma_{g}^k\right)$ be the group of all orientation-preserving self-diffeomorphisms of $\Sigma_{g}^k$ which are the identity on the boundary and $ Diff_{0}^{+}\left(\Sigma_{g}\right)$ be the subgroup of $Diff^{+}\left(\Sigma_{g}\right)$ consisting of all orientation-preserving self-diffeomorphisms that are isotopic to the identity. 
We assume that the isotopies fix the points on the boundary. \emph{The mapping class group} $M_{g}^k$ of $\Sigma_{g}^k$ is defined to be the group of isotopy classes of orientation-preserving diffeomorphisms of $\Sigma_{g}^n$, i.e.,
\[
M_{g}^k=Diff^{+}\left( \Sigma_{g}^k\right) /Diff_{0}^{+}\left(
\Sigma_{g}^k\right) .
\]
For simplicity, let us set $\Sigma_g = \Sigma_g^0$ and $M_g = M_g^0$.

\begin{defn} Let $\gamma$ be a simple closed curve on $\Sigma_{g}^k$. A \emph{right handed} (or positive) \emph{Dehn twist} about $\gamma$ is a diffeomorphism of $t_{\gamma}: \Sigma_{g}^k\rightarrow \Sigma_{g}^k$ obtained by cutting the surface $\Sigma_{g}^k$ along $\gamma$ and gluing the ends back after rotating one of the ends $2\pi$ to the right. 
\end{defn}

\noindent By a classical theorem of Dehn and Lickorish \cite{FM}, the mapping class group $M_g^k$ is generated by a finite number of Dehn twists.  The following lemma is easy to prove. For the proof, we refer the reader to \cite{I}.

\begin{lem} \label{con&braid.lem} 
\begin{enumerate}[(i)] \item If $\phi: \Sigma_{g}^k\rightarrow \Sigma_{g}^k$ is an orientation-preserving diffeomorphism, then $\phi \circ t_\alpha \circ \phi^{-1} = t_{\phi(\alpha)}$.

\item Let $\alpha$ and $\beta$ be two simple closed curves on $\Sigma_{g}^k$. If $\alpha$ and $\beta$ are disjoint, then their corresponding Dehn twists satisfy the commutativity relation: $t_{\alpha}t_{\beta}=t_{\beta}t_{\alpha}.$ If $\alpha$ and $\beta$ transversely intersect at a single point, then their corresponding Dehn twists satisfy the braid relation: $t_{\alpha}t_{\beta}t_{\alpha}=t_{\beta}t_{\alpha}t_{\beta}.$

\end{enumerate}
\end{lem}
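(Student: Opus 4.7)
The plan is to handle the three statements separately, using that a Dehn twist is supported in a tubular annular neighborhood of its core curve and is canonically modeled there.

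For (i), I would pick a closed tubular annular neighborhood $A$ of $\alpha$ on which $t_\alpha$ is realized by the standard annulus twist and is the identity outside $A$. Then $\phi \circ t_\alpha \circ \phi^{-1}$ is supported in $\phi(A)$, which is a closed tubular annular neighborhood of the simple closed curve $\phi(\alpha)$, and agrees with the identity outside. Because $\phi$ is orientation-preserving, the conjugated twist still rotates the annulus in the right-handed direction relative to the induced orientation on $\phi(A)$, so it is isotopic rel $\partial$ to the standard right-handed twist about $\phi(\alpha)$. Passing to isotopy classes gives the equality in $M_g^k$.

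For the disjoint case of (ii), I would choose tubular annular neighborhoods $A_\alpha$ and $A_\beta$ of $\alpha$ and $\beta$ small enough that $A_\alpha \cap A_\beta = \emptyset$; this is possible since $\alpha \cap \beta = \emptyset$. Each twist is the identity on the support of the other, so $t_\alpha$ and $t_\beta$ commute pointwise as diffeomorphisms, and hence in $M_g^k$. For the braid relation, I would reduce to a geometric statement by applying (i): the relation $t_\alpha t_\beta t_\alpha = t_\beta t_\alpha t_\beta$ is equivalent to $(t_\alpha t_\beta)\, t_\alpha\, (t_\alpha t_\beta)^{-1} = t_\beta$, which by part (i) is in turn equivalent to the claim that $(t_\alpha t_\beta)(\alpha)$ is isotopic to $\beta$ on $\Sigma_g^k$. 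Since $\alpha$ and $\beta$ meet transversely in a single point, a regular neighborhood $N$ of $\alpha \cup \beta$ is a once-punctured torus embedded in $\Sigma_g^k$, and this verification can be carried out inside $N$ by a direct picture: $t_\beta(\alpha)$ is a curve in $N$ that runs along $\alpha$ and then wraps once around $\beta$, and subsequently applying $t_\alpha$ to this curve yields a curve freely isotopic to $\beta$ inside $N$.

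The main obstacle is the last geometric claim $(t_\alpha t_\beta)(\alpha) \simeq \beta$, which is really the entire content of the braid relation and is most cleanly dispatched by exhibiting the bigon isotopy in the once-punctured torus neighborhood of $\alpha \cup \beta$; this is the step where one genuinely uses the hypothesis that the geometric intersection number is one. The rest of the argument is formal bookkeeping about supports of diffeomorphisms, and the whole lemma is standard, which is why the paper simply refers the reader to \cite{I}.
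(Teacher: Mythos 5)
Your argument is correct: part (i) and the commutativity case follow from the support considerations you describe, and your reduction of the braid relation via (i) to the assertion that $(t_\alpha t_\beta)(\alpha)$ is isotopic to $\beta$, checked in a once-punctured-torus neighborhood of $\alpha\cup\beta$, is the standard change-of-coordinates proof (as in \cite{FM}). The paper itself offers no proof of this lemma, deferring to \cite{I}, so there is nothing to compare against beyond noting that your route is exactly the one those references take; the only cosmetic point is that you only need the easy direction of your claimed ``equivalence'' (isotopic curves have equal twists), not the converse.
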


\subsection{Lantern relation and lantern substitution}
Let us recall the definition of the lantern relation (see \cite{FM, Mar, EG}).

\begin{defn}\label{daisy}\rm
Let $\Sigma_0^{4}$ denote a sphere with $4$ boundary components. Let $\delta_0, \delta_1, \delta_2, \delta_{3}$ be the $4$ boundary curves of $\Sigma_0^{4}$ and let $x_1, x_2, x_{3}$ be the interior curves as shown in Figure~\ref{daisy}. Then, we have the \textit{lantern relation}:
\begin{align*}
t_{\delta_0}t_{\delta_1}t_{\delta_2}t_{\delta_{3}}=t_{x_1}t_{x_2} t_{x_{3}}.
\end{align*}
We call the following relator the \textit{lantern relator}: 
\begin{align*}
t_{\delta_{3}}^{-1}t_{\delta_2}^{-1}t_{\delta_1}^{-1}t_{\delta_0}^{-1}t_{x_1}t_{x_2} t_{x_{3}} \ (=1).
\end{align*}
\end{defn}

\begin{figure}[ht]
\begin{center}
\includegraphics[scale=.83]{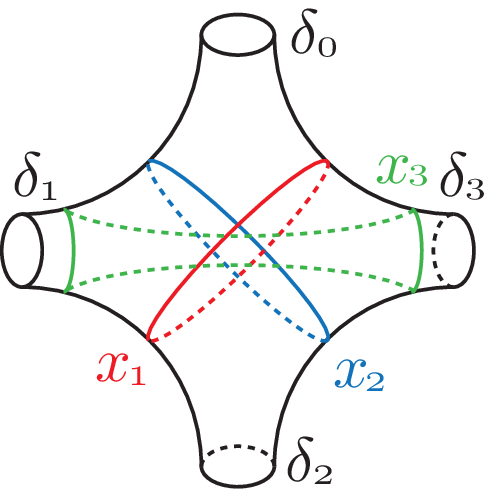}
\caption{Lantern relation}
\label{fig:hyper}
\end{center}
\end{figure}

Next, we introduce a lantern substitution. 

\begin{defn}\label{daisy substitution}\rm
Let $d_1,\ldots,d_m$ and $e_1,\ldots, e_n$ be simple closed curves on $M_g^k$, and let $R$ be a product $R=t_{d_1}t_{d_2}\cdots t_{d_l}t_{e_m}^{-1}\cdots t_{e_2}^{-1}t_{e_1}^{-1}$. Suppose that $R=1$ in $M_g^k$. Let $\tau$ be a word in $M_g^k$ including $t_{d_1}t_{d_2}\cdots t_{d_l}$ as a subword: 
\begin{align*}
\tau=U\cdot t_{d_1}t_{d_2}\cdots t_{d_l} \cdot V, 
\end{align*}
where $U$ and $V$ are words. Thus, we obtain a new word in $M_g^k$, denoted by $\tau^\prime$, as follows:
\begin{align*}
\tau^\prime:&=U\cdot t_{e_1}t_{e_2}\cdots t_{e_m} \cdot V.
\end{align*}
Then, we say that $\tau^{\prime}$ is obtained by applying a $R$-\textit{substitution} to $\tau$. In particular, if $R$ is a lantern relator, then we say that $\tau^{\prime}$ is obtained by applying a \textit{lantern substitution} to $\tau$. 
\end{defn}

\subsection{Lefschetz fibrations}
\begin{defn}\label{LF}\rm
Let $X$ be a closed, oriented smooth $4$-manifold. A smooth map $f : X \rightarrow \mathbb{S}^2$ is a genus-$g$ \textit{Lefschetz fibration} if it satisfies the following condition: \\
(i) $f$ has finitely many critical values $b_1,\ldots,b_m \in S^2$, and $f$ is a smooth $\Sigma_g$-bundle over $\mathbb{S}^2-\{b_1,\ldots,b_m\}$, \\
(ii) for each $i$ $(i=1,\ldots,m)$, there exists a unique critical point $p_i$ in the \textit{singular fiber} $f^{-1}(b_i)$ such that about each $p_i$ and $b_i$ there are local complex coordinate charts agreeing with the orientations of $X$ and $\mathbb{S}^2$ on which $f$ is of the form $f(z_{1},z_{2})=z_{1}^{2}+z_{2}^{2}$, \\
(i\hspace{-.1em}i\hspace{-.1em}i) $f$ is relatively minimal (i.e. no fiber contains a $(-1)$-sphere.)
\end{defn}

Each singular fiber of the Lefschetz fibration is an immersed surface with a single transverse self-intersection, and obtained by collapsing a simple closed curve (the \textit{vanishing cycle}) in the regular fiber. If the curve is nonseparating, then the singular fiber is called \textit{nonseparating}, otherwise it is called \textit{separating}. Moreover, a singular fiber can be described by its monodromy, i.e., by a right handed Dehn twist along the corresponding vanishing cycle. 

A Lefschetz fibration $f : X \rightarrow \mathbb{S}^2$ is \textit{holomorphic} if there are complex structures on both $X$ and $\mathbb{S}^2$ with holomorphic projection $f$. 

For a genus-$g$ Lefschetz fibration over $\mathbb{S}^2$, the product of right handed Dehn twists $t_{\gamma_i}$ along the vanishing cycles $\gamma_i$, for $i = 1,\ldots, m$, determines the global monodromy of the Lefschetz fibration, the relation $t_{\gamma_1} t_{\gamma_2} \cdots t_{\gamma_m}=1$ in $M_g$. This relation is called the \textit{positive relator}. Conversely, for any such a positive relator in $M_g$ one can construct a genus-$g$ Lefschetz fibration over $\mathbb{S}^2$ with the vanishing cycles $\gamma_1,\ldots, \gamma_m$: start with $\Sigma_{g} \times \mathbb{D}^{2}$, attach 2-handles $H_{i}$ along the vanishing cycles $\gamma_{i}$ in a $\Sigma_{g}$-fibers in $\Sigma_{g} \times \mathbb{D}^{2}$ with relative framing $-1$, and attach another copy of $\Sigma_{g} \times \mathbb{D}^{2}$ to $\Sigma_{g} \times \mathbb{D}^{2} \bigcup_{i=1}^{m} H_{i}$ via $\Sigma_{g}$-fiber preserving map of the boundary. We will denote a Lefschetz fibration associated to a positive relator $\varrho \ \in M_g$ by $f_\varrho$.


For a Lefschetz fibration $f:X\rightarrow \mathbb{S}^2$, a map $\sigma:\mathbb{S}^2\rightarrow X$ is called a \textit{section} of $f$ if $f\circ \sigma={\rm id}_{\mathbb{S}^2}$. 
The self-intersection of $\sigma$ is defined to be the self-intersection number of the homology class $[\sigma(\mathbb{S}^2)]$ in $H_2(X;\Z)$. 
Let $\delta_1,\delta_2,\ldots,\delta_k$ be $k$ boundary curves of $\Sigma_g^k$. 
If there exists a lift of a positive relator $\varrho = t_{\gamma_1} t_{\gamma_2} \cdots t_{\gamma_m} = 1$ in $M_g$ to $M_g^k$ as 
\begin{align*}
t_{\tilde{\gamma}_1} t_{\tilde{\gamma}_2} \cdots t_{\tilde{\gamma}_m} = t_{\delta_1} t_{\delta_2} \cdots t_{\delta_k}, 
\end{align*}
then $f_\varrho$ admits $k$ disjoint sections of self-intersection $-1$. 
Here, $t_{\tilde{\gamma}_i}$ is a Dehn twist mapped to $t_{\gamma_i}$ under $M_g^k \to M_g$. 
Conversely, if a genus-$g$ Lefschetz fibration admits $k$ disjoint sections of self-intersection $-1$, then we obtain such a relation in $M_g^k$. 

Let us now recall the signature formula of Matsumoto and Endo for hyperelliptic Lefschetz fibrations; it will be used to compute the signature of our genus two Lefschetz fibrations obtained in Section~\ref{construction}.  

\begin{thm}[\cite{Ma1},\cite{Ma},\cite{E}]\label{sign} Let $f:X\rightarrow \mathbb{S}^2$ be a genus $g$ hyperelliptic Lefschetz fibration. Let $s_0$ and $s=\Sigma_{h=1}^{[g/2]}s_h$ be the number of non-separating and separating vanishing cycles of $f$, where $s_h$ denotes the number of separating vanishing cycles which separate the surface of genus $g$ into two surfaces, one of which has genus $h$. Then, we have the following formula for the signature 
\begin{eqnarray*}
\sigma(X)=-\frac{g+1}{2g+1}s_0+\sum_{h=1}^{[\frac{g}{2}]}\left(\frac{4h(g-h)}{2g+1}-1\right)s_{h}.
\end{eqnarray*}
\end{thm}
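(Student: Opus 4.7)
The plan is to localize the signature of $X$ to contributions from each singular fiber, exploiting the special structure of the hyperelliptic mapping class group. The key ingredient is Meyer's signature cocycle $\tau_g \in Z^2(\mathrm{Sp}(2g,\mathbb{Z});\mathbb{Z})$, pulled back to $M_g$ via the symplectic representation $M_g \to \mathrm{Sp}(2g,\mathbb{Z})$. For any genus-$g$ Lefschetz fibration $f: X \to \mathbb{S}^2$, one first perturbs $f$ and uses Meyer's theorem together with Novikov additivity to express $\sigma(X)$ as an explicit sum of evaluations of $\tau_g$ on the monodromies around small disk neighborhoods of the critical values. In particular, the contribution from each singular fiber depends only on the conjugacy class of the Dehn twist $t_{\gamma_i}$ in $M_g$.

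Next, I would pass to the hyperelliptic mapping class group $\mathcal{H}_g \subset M_g$, which by the Birman-Hilden theorem is a central $\mathbb{Z}/2$-extension of the mapping class group of $\mathbb{S}^2$ with $2g+2$ marked points. The point is that the pullback of Meyer's class to $\mathcal{H}_g$ vanishes rationally: one constructs a $\mathbb{Q}$-valued $1$-cochain $\phi_g : \mathcal{H}_g \to \mathbb{Q}$ whose coboundary satisfies $d\phi_g = \tau_g|_{\mathcal{H}_g}$. This is precisely what fails for general (non-hyperelliptic) fibrations in high genus, and it is why the formula takes such a clean closed form. Given the existence of $\phi_g$, the global signature reduces to the sum of local values $\phi_g(t_{\gamma_i})$ over the singular fibers.

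The remaining step is to evaluate $\phi_g$ on each conjugacy class of Dehn twist that can occur as a vanishing cycle. Using Matsumoto's finite presentation of $\mathcal{H}_g$, I would track how the central element (corresponding to the hyperelliptic involution) is expressed in terms of Dehn-twist generators. A standard calculation along these lines yields
\[
\phi_g(t_\gamma) = -\tfrac{g+1}{2g+1} \qquad \text{for nonseparating } \gamma,
\]
and
\[
\phi_g(t_\gamma) = \tfrac{4h(g-h)}{2g+1} - 1 \qquad \text{for separating } \gamma \text{ of type } h.
\]
Summing these contributions over the $s_0$ nonseparating and $s_h$ separating vanishing cycles of each type produces the displayed formula.

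The main obstacle is establishing the coboundary relation $d\phi_g = \tau_g|_{\mathcal{H}_g}$ and pinning down the exact fractional coefficients. This requires an explicit cochain-level verification on a finite presentation of $\mathcal{H}_g$, together with the computation that on the image of the hyperelliptic involution $\phi_g$ takes a specific rational value. The denominator $2g+1$ ultimately reflects the order of the abelianization of $\mathcal{H}_g$ (which is cyclic of order $2(2g+1)$ for $g$ even and $4(2g+1)$ for $g$ odd), so that Dehn-twist generators, all conjugate in the nonseparating case, must carry equal rational weight summing to a specified value on a known relator. Matsumoto proved this for $g=2$ in \cite{Ma1,Ma} by direct analysis of the monodromy relation $(B_0 B_1 B_2 B_3)^{10} = 1$, and Endo \cite{E} generalized the argument to all $g$ by developing the corresponding local signature theory for hyperelliptic fibered surfaces.
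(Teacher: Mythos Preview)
The paper does not prove this theorem at all; it is stated as a background result with citations to Matsumoto and Endo, and is then used as a black box in Section~\ref{construction} to compute $\sigma(X_\theta)$, $\sigma(X_\eta)$, and $\sigma(X_{\theta_n})$. So there is no ``paper's own proof'' to compare against.

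That said, your outline is a faithful sketch of the argument in the cited references, especially Endo's treatment in \cite{E}: localize the signature via Meyer's cocycle $\tau_g$, use that its restriction to the hyperelliptic mapping class group $\mathcal{H}_g$ is a rational coboundary $d\phi_g$, and then evaluate $\phi_g$ on the conjugacy classes of Dehn twists to obtain the local contributions $-\tfrac{g+1}{2g+1}$ and $\tfrac{4h(g-h)}{2g+1}-1$. This is exactly the mechanism, and your identification of the coboundary step as the crux is correct.

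Two small inaccuracies worth cleaning up. First, the relation you attribute to Matsumoto's genus-$2$ computation, $(B_0B_1B_2B_3)^{10}=1$, is not the one at play; in the paper's notation Matsumoto's word is $(B_0B_1B_2C)^2=1$, and the signature calibration for nonseparating twists is more naturally done via the chain relation $(c_1c_2c_3c_4c_5)^6=1$ (thirty nonseparating vanishing cycles, total space signature $-18$, hence $-3/5$ each). Second, your parenthetical on the order of the abelianization of $\mathcal{H}_g$ has the parity cases swapped relative to the standard computation; what actually matters for the argument is the weaker fact that $H^2(\mathcal{H}_g;\mathbb{Q})=0$, which forces $\tau_g|_{\mathcal{H}_g}$ to be a $\mathbb{Q}$-coboundary. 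Neither point affects the overall correctness of your sketch.
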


\subsection{Matsumoto's genus two fibration}\label{m}

Yukio Matsumoto's genus two Lefschetz fibration can be conveniently described as the double branched cover of $\mathbb{S}^{2}\times \mathbb{T}^{2}$ with the branch set being the union of two disjoint copies of $\mathbb{S}^{2}\times \{{\rm{pt}}\}$ and two disjoint copies of $\{{\rm{pt}}\} \times \mathbb{T}^{2}$. The resulting branched cover has $4$ singular points, corresponding to the number of intersections of the horizontal spheres and the vertical tori in the branch set. By desingularizing this manifold, one obtains the total space of Matsumoto's fibration, $M = \mathbb{T}^{2}\times \mathbb{S}^{2}\,\#4\CPb$. Notice that the vertical $\mathbb{T}^2$ fibration on $\mathbb{S}^{2}\times \mathbb{T}^{2}$ pulls back to give a fibration of $\mathbb{T}^{2} \times \mathbb{S}^{2}\,\#4\CPb$ over $\mathbb{S}^{2}$. Since a generic fiber of the vertical fibration is the double cover of $\mathbb{T}^2$ branched over $2$ points, it is a genus two surface. Matsumoto proved that \cite{Ma}, the above fibration can be perturbed into Lefschetz one with the global monodromy given by the following word in the mapping class group $M_2$: $(B_{0} B_{1} B_{2} C)^{2} = 1$, where $B_{0}$, $B_{1}$, $B_{2}$, and $C$ denotes the Dehn twists along the curves $\beta_{0}$, $\beta_{1}$, $\beta_{2}$, and $c$ shown in Figure~\ref{fig:matsumoto}.  

Let us denote by $\Sigma_{2}$ the regular fiber of the fibration above, and the images of the standard generators of $\Sigma$ in the fundamental group of $\pi_{1}(M) = \mathbb{Z} \oplus \mathbb{Z}$ as $a_{1}$, $b_{1}$, $a_{2}$, and $b_{2}$. Using the homotopy long exact sequence for a Lefschetz fibration and the existence of $-1$ sphere sections of Matsumoto's fibration, we have the following identification of the fundamental group of $M$ \cite{OS}: 

\begin{equation*}
\pi_{1}(M) = \pi_{1}(\Sigma)/ \langle \beta_{0},\beta_{1},\beta_{2},c\rangle.
\end{equation*}

\begin{eqnarray}
\beta_{0} &=& b_{1}b_{2} ,\, \label{id:b1}\\
c &=& a_{1}b_{1}{a_{1}}^{-1}{b_{1}}^{-1} = a_{2}b_{2}{a_{2}}^{-1}{b_{2}}^{-1} ,\, \label{id:b2}\\
\beta_{1} &=& b_{2}a_{2}{b_{2}}^{-1}a_{1} ,\, \label{id:b3}\\
\beta_{2} &=& b_{2}a_{2}a_{1}b_{1} ,\, \label{id:b4}
\end{eqnarray}

\noindent Hence $\pi_{1}(M) = \ \langle a_{1}, b_{1}, a_{2}, b_{2} \ | \  b_{1}b_{2} = [a_{1}, b_{1}] = [a_{2}, b_{2}] = b_{2}a_{2}{b_{2}}^{-1}a_{1} = 1 \rangle $.

\medskip


\begin{figure}[ht]
\begin{center}
\includegraphics[scale=.63]{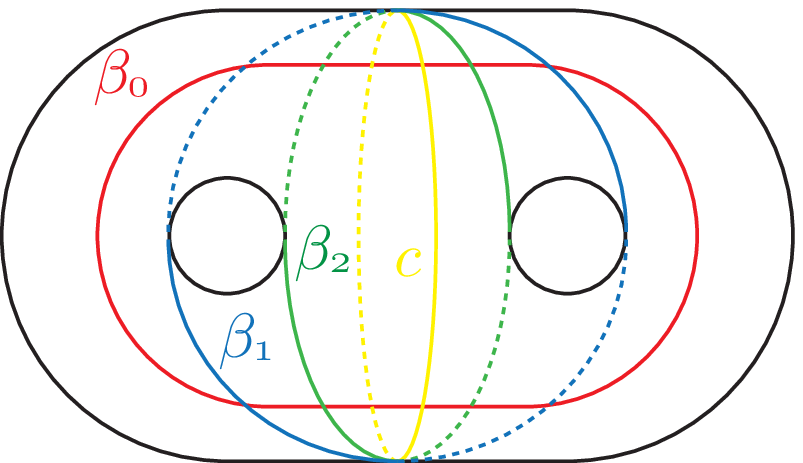}
\caption{Dehn twists for Matsumoto's fibration}
\label{fig:matsumoto}
\end{center}
\end{figure}

\subsection{Xiao's genus two fibrations}\label{x} In this section, we recall a result of Gang Xiao obtained in \cite{Xiao}, where a family of holomorphic genus two Lefschetz fibrations $f(E,d): S(E,d) \rightarrow X(d)$ was constructed for any integer $d \geq 2$. In a special case of $d=3$, the construction in \cite{Xiao}, yields to a genus two Lefschetz fibrations over $\mathbb{S}^{2}$ with $7$ singular fibers, three of which have the separating vanishing cycles. The proof can be derived from Theorem 3.16, Table on pages 52-53, and Theorem 4.5, Example 4.7 on pages 64-66). The case $d=2$ corresponds to Matsumoto's genus two fibration with $8$ singular fibers, that we discussed above. Let us summarize the result obtained by Xiao in the following theorem in a special case of $d=3$, which we will need later on. 

\begin{thm} There exist a holomorphic genus two Lefschetz fibration   \\
$f(E,3): S(E,3) \rightarrow X(3) = \mathbb{S}^{2}$ with $7$ singular fibers, three of which have separating vanishing cycles. Moreover, the complex surface $S(E,3)$ have the following invariants: $p_{g}(S(E,3)) = 0$, $q(S(E,3)) = 1$, $e(S(E,3)) = 3$, $\sigma(S(E,3)) = -3$, $c_1^{2}(S(E,3)) = -3$, and $\chi_{h}(S(E,3)) = 0$.

\end{thm}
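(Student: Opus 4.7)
The plan is to extract the statement directly from Xiao's construction and then independently verify all the listed invariants using the singular-fiber data together with the standard Lefschetz-fibration formulas. Since the theorem is attributed to Xiao, the existence part reduces to recalling his construction of the family $f(E,d):S(E,d)\to X(d)$, specializing to $d=3$, and reading off from Theorem 3.16 and the table on pages 52--53 of \cite{Xiao} the total number of singular fibers together with the number of those whose vanishing cycle is separating. Thus for the existence portion I would simply quote Xiao and specialize.

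The numerical part I would verify from scratch. First I would note that for a genus-$2$ Lefschetz fibration over $\mathbb{S}^2$ with $n$ singular fibers one has
\begin{equation*}
e(S(E,3)) \;=\; 4(1-g)\cdot 2 + n \;=\; -4 + n,
\end{equation*}
so $n=7$ yields $e(S(E,3))=3$. Next, writing $s_0$ for the number of nonseparating and $s_1$ for the number of separating vanishing cycles, I would apply the Matsumoto--Endo signature formula (Theorem~\ref{sign}) in genus $2$, which reads $\sigma=-\tfrac{3}{5}s_{0}-\tfrac{1}{5}s_{1}$; substituting $s_{0}=4$ and $s_{1}=3$ gives $\sigma(S(E,3))=-3$. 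Then I would compute
\begin{align*}
c_{1}^{2}(S(E,3)) &= 2e(S(E,3))+3\sigma(S(E,3)) = 2(3)+3(-3)=-3,\\
\chi_{h}(S(E,3)) &= \tfrac{1}{4}\bigl(e(S(E,3))+\sigma(S(E,3))\bigr)=\tfrac{1}{4}(3-3)=0.
\end{align*}

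For the Hodge numbers $p_{g}$ and $q$, the computation is not purely topological, and here I would rely on Xiao's classification of the holomorphic structure: Example 4.7 of \cite{Xiao} identifies $S(E,3)$ as an irregular surface built from the elliptic curve $E$, so that the Albanese map realizes $q(S(E,3))\geq 1$. Combined with $\chi_{h}=1-q+p_{g}=0$, any value $q\geq 1$ forces $p_{g}=q-1\geq 0$; Xiao's construction pins this down to $q(S(E,3))=1$ and therefore $p_{g}(S(E,3))=0$.

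The main obstacle in this proof is not the numerical bookkeeping, which is essentially automatic from the signature formula and the identities $c_{1}^{2}=2e+3\sigma$ and $4\chi_{h}=e+\sigma$, but rather the careful extraction of the singular fiber data and the holomorphic invariants $p_{g},q$ from Xiao's machinery in \cite{Xiao}. In practice I would present the existence and the count $(s_{0},s_{1})=(4,3)$ and the irregularity $q=1$ as consequences quoted from \cite{Xiao}, and keep the self-contained computations for $e,\sigma,c_{1}^{2},\chi_{h},p_{g}$ as the verifiable content of the proof.
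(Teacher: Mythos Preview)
Your proposal is correct, but it computes the numerical invariants by a different route than the paper. The paper does not actually prove this theorem; it states it as Xiao's result and then, in Example~\ref{XF}, recalls Xiao's explicit branched-double-cover construction: one blows up $\CP$ at the seven marked points of a complete quadrangle plus a nodal quartic, forms the double cover $\widetilde{S}\to Y=\CP\#7\CPb$ branched over the divisor $D$, and computes $e(\widetilde{S})=2e(Y)-e(D)=6$ and $\sigma(\widetilde{S})=2\sigma(Y)-D^{2}/2=-6$ directly from the branched-cover formulas, then adjusts by contracting three $(-1)$-spheres in fibers to obtain $e(S)=3$ and $\sigma(S)=-3$. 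Your approach instead takes the singular-fiber count $(s_{0},s_{1})=(4,3)$ as input and feeds it into the Lefschetz-fibration Euler characteristic formula and the Matsumoto--Endo signature formula. Both are valid; the paper's version has the advantage that the same branched-cover picture simultaneously \emph{explains} why $(s_{0},s_{1})=(4,3)$ (three separating fibers from the configurations $\widetilde{L_i}\cup E_j\cup E_k$, four nonseparating fibers from the tangent lines to the quartic through $P_1$), so the singular-fiber data is not a separate black-box input. Your version is cleaner bookkeeping once that data is granted. Both accounts defer $q=1$ to Xiao and then read off $p_g=0$ from $\chi_h=0$. One small slip: your displayed Euler-characteristic formula should read $2(1-g)\cdot 2+n$ (or simply $4(1-g)+n$); as written, $4(1-g)\cdot 2$ evaluates to $-8$ for $g=2$, not the $-4$ you then use.
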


Since $p_{g}(S(E,3)) = 0$ and $q(S(E,3)) = 1$, by the Enriques-Kodaira classification of complex surfaces \cite{BHPV} we see that $S(E,3)$ is the blow up of an $\mathbb{S}^{2}$ bundle over $\mathbb{T}^{2}$. Furthermore, using  $\sigma(S(E,3)) = -3$, we deduce that the total space $S(E,3)$ is $\mathbb{T}^{2}\times \mathbb{S}^{2}\,\#3\CPb$. This can be also derived from Proposition 4.1 in \cite{Sato}. 

For the sake of completness and for the beauty of the construction, let us also recall Example 4.7 from \cite{Xiao} below, which gives a geometric description of the fibration above, and provide the details. The discussion below allows us to see the existence of three $(-1)$ sphere sections of Xiao's fibration, and types of its seven singular fibers. We are grateful to R. Pignatelli for explaining this fibrations to us.  

\begin{exmp}\label{XF}(\emph{Xiao's genus two fibration with $p_g=0$ , $q=1$ and $c_1^{2} = -3$}) \\
 In complex projective plane $\mathbb{CP}^2$, let us consider a configuration of six complex lines $L_{1}$, $L_{2}$, $L_{3}$, $L_{1}'$, $L_{2}'$, $L_{3}'$ with $4$ triple points $P_1$, $P_2$, $P_3$, $P_4$, and $3$ double points $P_5$, $P_6$, $P_7$, i.e., a complete quadrangle (see Figure~\ref{fig:qu}). 

\begin{figure}[ht]
\begin{center}
\includegraphics[scale=.63]{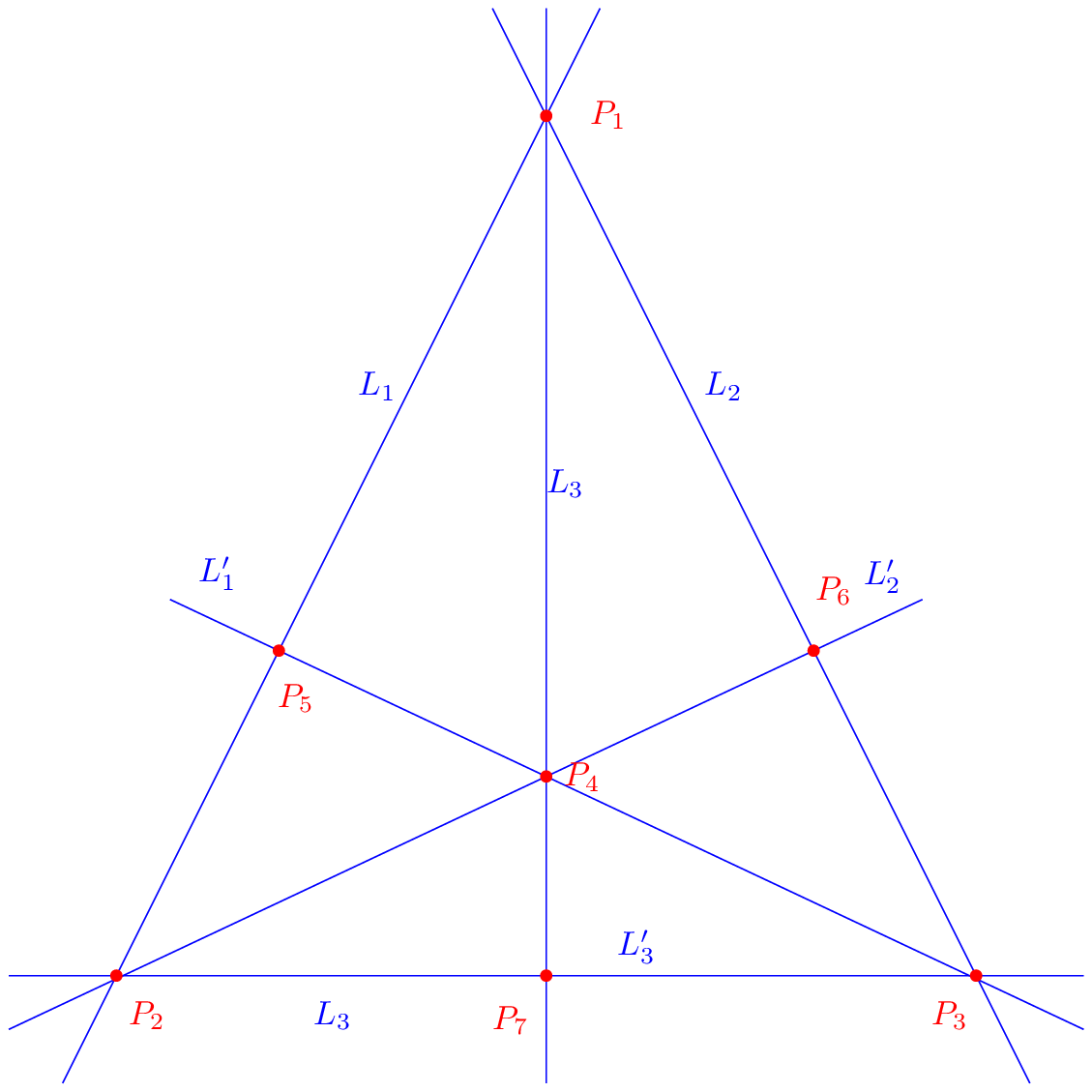}
\caption{Complete quadrangle}
\label{fig:qu}
\end{center}
\end{figure}

Let us choose a generic irreducible complex quartic $C$ passing through these seven points which has the double points at $P_5$, $P_6$, and $P_7$. The existence of such an irreducible quartic is explained in \cite{Xiao}, on page 66. Blow up $\mathbb{CP}^2$ at the points $P_1,\cdots,P_7$, and let $\pi: Y:= \CP\#7\CPb \rightarrow \mathbb{CP}^2$ be the blow up map and $E_i$ be the exceptional divisor corresponding to the blow up at the points $P_i$ for $i=1,\cdots,7$. Let $H$ be the total transform in $Y$ of a line in $\mathbb{CP}^2$, and let $\widetilde{L_{j}}$, $\widetilde{L_{j}'}$, $\widetilde{C}$ be the strict transform of the lines $L_j$, $L_j'$ and quartic $C$ in $\mathbb{CP}^2$. We have

\begin{equation}
\begin{split}
\widetilde{L_1} = H-E_1-E_5-E_2,\, \widetilde{L_1'} = H-E_3-E_4-E_5, \\
\widetilde{L_2} = H- E_1-E_6-E_3,\, \widetilde{L_2'} = H-E_2-E_4-E_6,  \\
\widetilde{L_3} = H- E_1-E_4-E_7,\, \widetilde{L_3'} = H-E_2-E_7-E_3, \\
\widetilde{C} = 4H- E_1- E_2- E_3- E_4 - 2E_5 - 2E_6 - 2E_7.
\label{eq:lines}
\end{split}
\end{equation}

Let
\begin{equation}
D = \widetilde{L_1} + \widetilde{L_2} + \widetilde{L_3}+ \widetilde{L_1'}+ \widetilde{L_2'}+ \widetilde{L_3'} + \widetilde{C} = 10H - 4(E_{1} + \cdots + E_{7})
\label{eq:D}
\end{equation}
be a divisor on $Y$ which has no crossings and divisible by two. The divisor $D$ determines a double cover $p:\widetilde{S} \rightarrow Y$. 

Notice that there is a genus two fibration obtained by pulling back the pencil of lines through point $P_1$. Xiao's fibrations is obtained by considering the relatively minimal model of this fibration. More precisely, he contracts all $(-1)$ spheres contained in fibers. These three $(-1)$ spheres on the double cover arise from the proper transforms of lines $L_1$, $L_2$, and $L_3$. The proper transforms  $\widetilde{L_1'}$,  $\widetilde{L_2'}$,  $\widetilde{L_3'}$ of complex lines $L_1'$, $L_2'$, and $L_3'$ also give rise to the spheres with self-intersection $(-1)$ in the double cover, which are the sections of this fibration. 
 
Let us denote the total space of Xiao's fibration by $S$. Next, we verify that $e(S) = 3$, $\sigma(S) = -3$. By Euler number and Hirzebruch's signature formulas for the double cover, we have  compute the Euler number and the signature of $\widetilde{S}$ as follows.
 \begin{equation*}
\begin{split}
e(\widetilde{S}) = 2e(Y) - e(D)= 2\times 10 - 2 \times 7 = 6, \\
\sigma(\widetilde{S}) = 2\sigma(Y) - \frac{D^2}{2} = 2\times (-6) - \frac{(-12)}{2} = -6
\end{split}
\end{equation*}

Now, using $e(S) = e(\widetilde{S}) - 3= 3$, $\sigma(S) = e(\sigma(\widetilde{S})) + 3 = -3$, we see that $\chi_{h}(S) = 0$ and $c_1^2(S) = -3$. It is shown in \cite{Xiao} that the irregularity $q$ of the complex surface $S$ is $1$. 

One can also see the types of the singular fibers from the above branch cover description. The configurations of curves $\widetilde{L_1} \cup E_{2} \cup E_{5}$, $\widetilde{L_2} \cup E_{3} \cup E_{6}$, and $\widetilde{L_3} \cup E_{4} \cup E_{7}$ arising from the blow up of the points $P_1,\cdots, P_7$, give rise to the three singular fibers with separating vanishing cycles in $S$, once we contracts all $(-1)$ spheres of $\widetilde{S}$ contained in fibers. Also, since the quartic $C$ has exactly four tangent lines passing through the point $P_1$ (this follows from the Riemann-Hurwitz formula), these complex tangent lines in the double cover give us four singular fibers with nonseparating vanishing cycles.  
\end{exmp}

\subsection{Symplectic Kodaira Dimension and Minimality of Symplectic Fiber Sums}\label{kd}

In what follows, we recall the definition of symplectic Kodaira dimension, and state a few theorems on computing the symplectic Kodaira dimension and on symplectic minimality of fiber sums, that are used in proving the main results in the next section. 

The notion of Kodaira dimension has been introduced by K. Kodaira for complex manifolds. It has played a fundamental role in the classification theory of complex surfaces. For symplectic $4$-manifolds, there is also a notion of symplectic Kodaira dimension (see \cite{Li2006, li, McDuffS, LeBrun}). 

\begin{defn} \label{sym Kod}
For a minimal symplectic $4-$manifold $(X^4,\omega)$ with symplectic
canonical class $K_{\omega}$,   the Kodaira dimension of
$(X^4,\omega)$ is defined as follows:

$$
\kappa^s(X^4,\omega)=\begin{cases} \begin{array}{lll}
-\infty & \hbox{ if $K_{\omega}\cdot [\omega]<0$ or} & K_{\omega}\cdot K_{\omega}<0,\\
0& \hbox{ if $K_{\omega}\cdot [\omega]=0$ and} & K_{\omega}\cdot K_{\omega}=0,\\
1& \hbox{ if $K_{\omega}\cdot [\omega]> 0$ and} & K_{\omega}\cdot K_{\omega}=0,\\
2& \hbox{ if $K_{\omega}\cdot [\omega]>0$ and} & K_{\omega}\cdot K_{\omega}>0.\\
\end{array}
\end{cases}
$$

\noindent If $(X^4,\omega)$ is not minimal, then it's Kodaira dimension is defined to be that of any of its minimal models. 
\end{defn}

It is proved in \cite{Li2006} that the symplectic Kodaira dimension is a diffeomorphism invariant. 
Also, it was shown in \cite{DZ} that the symplectic Kodaira dimension coincides with the complex Kodaira dimension when both are defined. 

Let us now recall two theorems proved in \cite{LY} and \cite{Dor}, concerning how the symplectic Kodaira dimension changes under the symplectic fiber sum. These theorems will be used to verify that our Lefschetz fibration have the symplectic Kodaira dimension $2$. 

\begin{thm}\label{kod1} Let $M = X\#_{\Sigma} Y$ be a symplectic fiber sum along an embedded symplectic surface $\Sigma$ in the four-manifolds $X$ and $Y$. Then the symplectic Kodaira dimension is non-decreasing, i.e. $\kappa(M) \geq max\{\kappa(X), \kappa(Y ), \kappa(\Sigma )\}$.
\end{thm}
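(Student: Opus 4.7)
The plan is to reduce to the minimal case and then apply the standard gluing formulas for the canonical class under a symplectic fiber sum. Since the Kodaira dimension of a non-minimal symplectic manifold is by definition that of its minimal model, I would first pass to minimal models of $X$ and $Y$; by Usher's minimality theorem for symplectic fiber sums, $M$ itself is minimal under mild hypotheses, and when it fails to be so one can describe the minimal model of $M$ explicitly in terms of exceptional classes disjoint from $\Sigma$. After this reduction, it suffices to compare the two sign-invariants $K \cdot [\omega]$ and $K^{2}$ between the pieces and the fiber sum.

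Next I would invoke the Gompf/Ionel--Parker gluing formulas, which, using $[\Sigma]^{2}=0$ on both sides together with the adjunction identity $K_{X}\cdot[\Sigma] = K_{Y}\cdot[\Sigma] = 2g-2$, yield
\begin{align*}
K_{M}^{2} &= K_{X}^{2} + K_{Y}^{2} + 8(g-1), \\
K_{M}\cdot[\omega_{M}] &= K_{X}\cdot[\omega_{X}] + K_{Y}\cdot[\omega_{Y}] + 2\bigl([\Sigma]\cdot[\omega]\bigr),
\end{align*}
(or the appropriate Mayer--Vietoris variant thereof), where $[\Sigma]\cdot[\omega]>0$ because $\Sigma$ is symplectic, and $8(g-1)\geq 0$ once $g\geq 1$, with strict positivity precisely when $\kappa(\Sigma)=1$, i.e.\ $g\geq 2$.

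With these formulas at hand, the proof proceeds by case analysis on $k:=\max\{\kappa(X),\kappa(Y),\kappa(\Sigma)\}$. The case $k=-\infty$ is vacuous. For $k\geq 0$ the strict positivity of $2[\Sigma]\cdot[\omega]$ combined with the non-negativity of the Kodaira-defining invariants of the dominant piece forces $K_{M}\cdot[\omega_{M}]>0$, which (together with the genus term in the $K^{2}$ identity) gives $\kappa(M)\geq 1$ whenever $k\geq 1$. For $k=2$ one piece already has $K^{2}>0$, and the genus term is non-negative, so $K_{M}^{2}>0$ and hence $\kappa(M)=2$. The subtlest subcase is $k=\kappa(\Sigma)=1$ while $\kappa(X)=\kappa(Y)=-\infty$, where the entire positivity of $K_{M}\cdot[\omega_{M}]$ must come from the $\Sigma$-contribution; here one controls the potentially negative terms $K_{X}\cdot[\omega_{X}]$ and $K_{Y}\cdot[\omega_{Y}]$ using the structure theory of minimal rational and ruled surfaces, where such quantities are bounded below in terms of $[\Sigma]\cdot[\omega]$ via adjunction.

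The main obstacle I expect is the passage to minimal models: a symplectic fiber sum of minimal manifolds need not be minimal, and controlling $K_{M}$ after blowing down exceptional spheres requires the full strength of Usher's classification of exceptional classes in a fiber sum as those representable by spheres disjoint from the gluing region. Once that technical point is absorbed, the remaining argument is essentially the numerical case analysis outlined above, and the positivity of the genus-$g$ correction $8(g-1)$ is what produces the lower bound $\kappa(M)\geq\kappa(\Sigma)$ in the ``high genus'' regime.
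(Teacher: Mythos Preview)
The paper does not prove this statement at all: Theorem~\ref{kod1} is quoted from Li--Yau \cite{LY} as background, with no argument given, so there is no ``paper's own proof'' to compare your attempt against.

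As for your sketch on its own merits, the overall architecture---pass to minimal models via Usher, use additivity of $c_1^2$ under fiber sum to get $K_M^2 = K_X^2 + K_Y^2 + 8(g-1)$, and then run a case analysis on $\max\{\kappa(X),\kappa(Y),\kappa(\Sigma)\}$---is indeed the shape of the argument in \cite{LY} and \cite{Dor}. However, two points are not yet under control. First, your displayed formula for $K_M\cdot[\omega_M]$ is not correct as written: the symplectic form on the fiber sum is not simply the ``sum'' of $\omega_X$ and $\omega_Y$, and the correction term depends on the gluing parameters (neck length, area normalization), so one has to be careful to fix a model form and compute the pairing there; the term $2[\Sigma]\cdot[\omega]$ is at best heuristic. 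Second, the case you flag as ``subtlest''---$\kappa(X)=\kappa(Y)=-\infty$ with $g(\Sigma)\geq 1$---is where essentially all the work lies. Saying that the negative contributions $K_X\cdot[\omega_X]$ and $K_Y\cdot[\omega_Y]$ are ``bounded below in terms of $[\Sigma]\cdot[\omega]$ via adjunction'' is not enough: one needs the detailed structure of symplectic rational and ruled surfaces (McDuff's classification, Li--Liu's analysis of exceptional classes and the symplectic cone) to show that a square-zero genus $\geq 1$ surface forces the required inequalities, and to handle the non-minimal pieces after blowing down. Without those ingredients the case analysis does not close.
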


For genus $0$ case, the above theorem reduces to the following theorem (see \cite{Dor}).

\begin{thm}\label{kod2}Let $M = X\#_{S} Y$ be a symplectic fiber sum on four-manifolds along a symplectic hypersurface $S$ of genus $0$ and $\kappa(X) \geq 0$. Assume there exist no symplectic exceptional spheres disjoint from $S$ in $X$ or $Y$. Then $\kappa(M) \geq \kappa(X)$.
\end{thm}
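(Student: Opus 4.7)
The plan is to reduce to the minimal model of $M$ and then exploit structural theorems on exceptional classes in symplectic fiber sums. Since the symplectic Kodaira dimension is defined via the minimal model (Definition~\ref{sym Kod}), the target is to show $\kappa(M_{\min}) \geq \kappa(X)$. First I would invoke Usher's minimality theorem for symplectic fiber sums, which asserts that every exceptional symplectic sphere in $M$ falls into one of two types: those arising as proper transforms of exceptional spheres of $X$ or $Y$ that are disjoint from $S$, or those obtained by symplectically fiber summing a matched pair of exceptional spheres (one in $X$, one in $Y$), each meeting $S$ transversely in a single point. By the hypothesis, the first type is absent, so every exceptional sphere in $M$ descends from a matched pair.

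Second, I would organize the blowdown $M \to M_{\min}$ as a coherent sequence of simultaneous blowdowns of matched pairs in $X$ and $Y$. This produces symplectic manifolds $X'$ and $Y'$ whose fiber sum along the corresponding image of $S$ is a minimal model of $M$. Since $X'$ is obtained from $X$ by a sequence of symplectic blowdowns, we have $\kappa(X') = \kappa(X) \geq 0$, and the hypothesis ruling out stray exceptional spheres persists through the procedure (because no new stray spheres are created by blowing down spheres that met $S$).

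Third comes the canonical class computation. Using the standard formula for the symplectic canonical class of a fiber sum, together with the adjunction identity $K_{X'} \cdot [S] = K_{Y'} \cdot [S] = -2$ for a genus-$0$ symplectic surface of self-intersection $0$, one derives an expression of the form $K_{M_{\min}} = K_{X'} + K_{Y'} + 2[F]$, where $[F]$ is the class of the image of $S$ in $M_{\min}$. Pairing with $[\omega_{M_{\min}}]$ — which restricts to $[\omega_{X'}]$ and $[\omega_{Y'}]$ on the two sides of the neck and satisfies $[F]\cdot[\omega_{M_{\min}}] > 0$ since $F$ is symplectic — and using positivity $K_{X'} \cdot [\omega_{X'}] \geq 0$ coming from $\kappa(X) \geq 0$, yields $K_{M_{\min}} \cdot [\omega_{M_{\min}}] \geq 0$. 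A parallel argument using the additivity of $c_1^{2}$ under fiber sum (adjusted for the genus-$0$ neck correction) together with the already-performed blowdowns gives $K_{M_{\min}}^{2} \geq 0$. Inserting these inequalities into Definition~\ref{sym Kod} produces $\kappa(M) = \kappa(M_{\min}) \geq \kappa(X)$ as desired.

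The hard part will be the second step. A priori, the exceptional spheres one must blow down to reach $M_{\min}$ could intersect each other across the neck in a complicated way, so one must show they can be grouped into matched pairs that can be blown down simultaneously on both sides without obstruction. This is the structural content of Usher's theorem, and is precisely where the hypothesis ruling out stray exceptional spheres in $X$ and $Y$ becomes essential: without it, exceptional classes could appear in $M$ that have no counterpart in either factor, making the matching procedure fail and the comparison of Kodaira dimensions break down. The genus-$0$ hypothesis on $S$ is what makes such matched pairs possible in the first place, and simultaneously what makes the additional hypothesis on stray exceptional spheres necessary, since Theorem~\ref{kod1} alone does not yield a useful bound once $\kappa(\Sigma) = -\infty$.
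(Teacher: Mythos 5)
The paper offers no proof of this statement: it is quoted directly from Dorfmeister \cite{Dor} as the genus-$0$ specialization of Theorem \ref{kod1}, so there is nothing in-paper to compare against, and your proposal must stand on its own. It does not. The decisive structural fact in the genus-$0$ setting --- and the engine of the actual proof in \cite{Dor} --- is that $\kappa(X)\geq 0$ forbids $X$ from containing a symplectic sphere of non-negative square (McDuff), so $[S_X]^2<0$, hence $[S_Y]^2=-[S_X]^2>0$, hence $Y$ is rational or ruled. Your step 3 silently drops the term $K_{Y'}\cdot[\omega_{Y'}]$, which for such a $Y$ is typically negative (and can be made arbitrarily negative by blowing up $Y$ away from $S$ --- which your hypotheses forbid, but only the case analysis of rational pairs $(Y,S_Y)$ actually exploits that), so $K_{M_{\min}}\cdot[\omega_{M_{\min}}]\geq 0$ does not follow from $K_{X'}\cdot[\omega_{X'}]\geq 0$ alone. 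Likewise the genus-$0$ correction to the square of the canonical class is $K_M^2=K_X^2+K_Y^2-8$, a strictly negative contribution that your sketch waves away. And even if both inequalities $K_{M_{\min}}\cdot[\omega]\geq 0$ and $K_{M_{\min}}^2\geq 0$ were established, Definition \ref{sym Kod} would only yield $\kappa(M)\geq 0$, which is strictly weaker than $\kappa(M)\geq\kappa(X)$ when $\kappa(X)\in\{1,2\}$.

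Step 2 also has a concrete defect: blowing down an exceptional sphere of $X$ meeting $S$ transversely once raises $[S_{X'}]^2$ by one, and doing the same on the $Y$ side raises $[S_{Y'}]^2$ by one, so the matching condition $[S_{X'}]^2+[S_{Y'}]^2=0$ is destroyed and $M_{\min}$ is not the fiber sum of $X'$ and $Y'$ along the images of $S$ in any literal sense; moreover a glued pair of $(-1)$-spheres each meeting $S$ once produces a sphere of square $-2$, not an exceptional sphere, so the ``matched pair'' picture of exceptional classes in $M$ is not correct as stated (compare the more delicate cases (i) and (ii) of Theorem \ref{minimality}, involving a conic in $\CP$ or a section of an $\mathbb{S}^2$-bundle). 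The workable route is the one Dorfmeister takes: use the rationality of $Y$ and the hypothesis that every exceptional sphere of $Y$ meets $S_Y$ to classify the admissible pairs $(Y,S_Y)$, and then identify $X\#_S Y$ case by case as a standard modification of $X$ (a trivial sum with an $\mathbb{S}^2$-bundle along a section, a blowup, or a rational blowdown as in the $+4$-sphere case of Theorem \ref{minimality}), verifying $\kappa(M)\geq\kappa(X)$ directly in each case.
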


The minimality of symplectic fiber sums is described by the following theorem. We will use this theorem to verify that the total space of our Lefschetz fibrations are minimal symplectic $4$-manifolds.

\begin{thm}(\cite{U}, \cite{Dor})\label{minimality}. Let $M$ be the symplectic fiber sum $X\#_{V}Y$ of the symplectic manifolds $(X, \omega_{X})$ and $(Y, \omega_{Y})$ along an embedded symplectic surface $V$ of genus $g \geq 0$.
\begin{enumerate}[(i)]
\item The manifold M is not minimal if $X\setminus V_{X}$ or $Y \setminus V_{Y}$ contains an embedded symplectic sphere of self-intersection $-1$ or $X\#_{V} Y = Z\#_{V_{\CP}}\CP$ with $V_{\CP}$ an embedded +4-sphere in class $[V_{\CP}] = 2[H] \in H_{2}(\CP,\mathbb{Z})$ and $Z$ has at least 2 disjoint exceptional spheres $E_i$ each meeting the submanifold $V_{Z} \subset Z$
positively and transversely in a single point with $[E_i] \cdot [V_{X}] = 1$.
\item If $X\#_V Y= Z\#_{V_B}B$ where $B$ is a $\mathbb{S}^2$-bundle over a genus $g$ surface and $V_{B}$ is a section of this bundle then $M$ is minimal if and only if $Z$ is minimal.
\item In all other cases $M$ is minimal.
\end{enumerate}
\end{thm}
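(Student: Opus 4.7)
Minimality of a symplectic $4$-manifold is the absence of symplectic $(-1)$-spheres, so the theorem amounts to classifying all sources of exceptional symplectic spheres in $M = X \#_V Y$. I would organize the proof in three stages matching the three alternatives in the statement.

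First, the easy direction of (i) and the ``only if'' of (ii): any symplectic $(-1)$-sphere $E$ supported in $X \setminus V_X$, in $Y \setminus V_Y$, or in $Z \setminus V_Z$ in the bundle case, survives the cut-and-paste verbatim as a symplectic $(-1)$-sphere in $M$, so these conditions indeed force non-minimality. The special $\CP$ configuration in (i) requires a separate check: one constructs an explicit $(-1)$-class in $M$ by tubing a generic line in the $\CP$ summand along matching arcs on the $Z$ side using the two exceptional classes $E_i$ with $[E_i]\cdot [V_Z]=1$; the resulting class has square $-1$ and, by Taubes, is represented by a symplectic sphere.

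For the hard direction — that in all remaining cases $M$ is minimal — suppose $E \subset M$ is a symplectic exceptional sphere. By Taubes' SW $\Leftrightarrow$ Gr, for any $\omega_M$-tame almost complex structure $J$ there is a $J$-holomorphic representative of $[E]$. Choose $J$ adapted to the neck $V \times [-T, T] \subset M$ and let $T \to \infty$. An SFT-type compactness argument produces a broken limit in which $E$ decomposes into finitely many $J_X$-holomorphic pieces in $X \setminus V_X$ and $J_Y$-holomorphic pieces in $Y \setminus V_Y$, asymptotic to cylinders over closed orbits in $V$ and matched with compatible multiplicities. Set $k = [E] \cdot [V]$, computed in $M$. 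If $k = 0$ then $E$ is supported entirely on one side, giving case (i). If some piece is a sphere meeting $V$ transversely once, positivity of intersections and the square-$(-1)$ constraint force the complementary piece to be a sphere section of the normal bundle of $V$, which in turn forces the corresponding summand to split as $B \#_{V_B} Z'$ with $B$ an $\mathbb{S}^2$-bundle over $V$ — this is case (ii), and minimality of $M$ is then equivalent to minimality of $Z$.

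The main obstacle is the case $k \geq 2$: one must rule out exotic multi-component limits not accounted for by (i) or (ii). Here the adjunction inequality applied to each component, combined with the global genus constraint $g(E) = 0$, the self-intersection $[E]^2 = -1$, and positivity of intersections of $J$-holomorphic curves, severely restricts the possible configurations. Following Usher, one shows that the only surviving case is when $[V]$ is divisible by $2$ in one summand, which must then be a $\CP$ with $[V_{\CP}]=2[H]$ and the $\CP$-component of $E$ in class $[H]$ hitting $V_{\CP}$ twice — precisely the second alternative in (i). The sharpening needed in the low-genus regime $g=0$, which is the situation used in the present paper, comes from Dorfmeister's refinement of Usher's argument and relies on the Ionel--Parker / Li--Ruan symplectic sum formula together with Taubes' simple-type constraints on SW basic classes of minimal symplectic $4$-manifolds; this is the most technical step of the proof.
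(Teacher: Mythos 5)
The paper never proves Theorem~\ref{minimality}; it is quoted verbatim from Usher \cite{U} and Dorfmeister \cite{Dor} and used as a black box, so there is no in-paper argument to compare yours against. Measured against the proofs in those references, your outline has the right architecture: the easy direction by transplanting exceptional spheres from $X\setminus V_X$, $Y\setminus V_Y$, or $Z\setminus V_Z$, together with the explicit square-$(-1)$ sphere obtained by gluing a line in the $\CP$ summand to the two exceptional spheres $E_i$ meeting $V_Z$ once; and the hard direction via Taubes' nonvanishing for exceptional classes, a degeneration/neck-stretching of a holomorphic representative into relative pieces on the two sides (equivalently, the Ionel--Parker/Li--Ruan sum formula for the relevant Gromov invariants), and a case analysis on $[E]\cdot[V]$ using positivity of intersections and adjunction, with $[E]\cdot[V]=1$ leading to the ruled case (ii) via McDuff's structure theory and $[E]\cdot[V]=2$ forcing the $\CP$ alternative of (i). Be aware, though, that what you have written is a plan rather than a proof: the step you yourself flag as the main obstacle --- ruling out exotic multi-component limits when $[E]\cdot[V]\geq 2$ --- is precisely where the real work in \cite{U} lives, and your appeal to ``Taubes'' for the symplecticity of the glued $(-1)$-class in the $\CP$ case is unnecessary (the Gompf sum of symplectic surfaces transverse to $V$ is already symplectic). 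As a blind reconstruction of the cited proof's strategy this is accurate; as a self-contained proof it is incomplete, which is acceptable here only because the statement is an imported external result.
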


\section{Genus two Lefschetz fibration with $b^{+}_{2}=1$ and ${c_1}^{2}=1$}\label{construction}

In this section, we state and prove some our main results, which are Theorems~\ref{thm}, ~\ref{thm1}, and ~\ref{thm2}. We begin with some preparation before stating and proving these theorems. 

Let $\beta_{0}$, $\beta_{1}$, $\beta_{2}$, and $c$ be the vanishing cycles corresponding to the Dehn twists $B_0$, $B_1$, $B_2$, and $C$ of Matsumoto's genus two Lefschetz fibration given in Section~\ref{m}. We will make use of the following two identities

\begin{align}
(B_{0} B_{1} B_{2} C) (B_{0} B_{1} B_{2} C) &= 1 \label{eq:1}\\
B_{0} B_{1} B_{2} C (\beta_{0}) &= \beta_{0} \label{eq:2}
\end{align}

The first identity simply comes from the monodromy of Matsumoto's genus two fibration, and the second identity obtained using the fact that the vertical involution $\iota$ of the genus two surface with two fixed points leaves the curve $\beta_{0}$ invariant. By applying the identities~\eqref{eq:1}, \eqref{eq:2} and part (i) of lemma~\ref{con&braid.lem}, we have 
\begin{align}
1 = B_{0}B_{0} B_{1} B_{2} C B_{1} B_{2} C &= B_{0}^{2}(B_{1}B_{2} C)^{2} = (B_{1}B_{2} C)^{2}B_{0}^{2} \label{eq:3}
\end{align}

Let $f$ and $g$ be the diffeomorphisms of the genus two surface. Let $X(2)$, $X(2, f)$, and  $X(2, f, g)$ denote the total space of the genus two Lefschetz fibration given by the relators $({\theta^{2}})^{2} = (B_0B_1B_2C)^4 = 1$, ${\theta^{2}}{(\theta^{2})}_{f} = \\
(B_0B_1B_2C)^2 f(B_0B_1B_2C)^2)f^{-1} = 1$, and  ${(\theta^{2})}_{f}{(\theta^{2})}_{g} = {\theta^{2}}{(\theta^{2})}_{f^{-1}g} = 1$ in $M_2$, respectively. Notice that $X(2,f)$ and $X(2,f,g)$ obtained via twisted fiber sum of two copies of Matsumoto's fibration, and $X(2, f) = X(2)$ when $[f] = 1 \in M_{2}$.

\begin{thm}\label{thm} There exists a Lefschetz fibration $f_{\theta}: X_{\theta} \rightarrow \mathbb{S}^{2}$ with $b^{+}_{2}(X_{\theta})=1$, ${c_1}^{2}(X_{\theta})=1$ and $\pi_{1}(X_{\theta}) = \mathbb{Z}_{3}$ that can be obtained from twisted fiber sum of two copies of Matsumoto's fibration by applying a single lantern substitution to it's monodromy. Moreover, $X_{\theta}$ is a minimal symplectic $4$-manifold with the symplectic Kodaira dimension $\kappa = 2$.
\end{thm}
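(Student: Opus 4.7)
First, I would construct the monodromy word $W_\theta$. Begin with the positive relator $(B_0B_1B_2C)^4 = 1$ corresponding to the (untwisted) double of Matsumoto's fibration. Using identity \eqref{eq:3}, together with conjugations of subwords by chosen diffeomorphisms of $\Sigma_2$ (which implement the twisting of the fiber sum), rearrange the word so that it contains, as a contiguous subword, four pairwise-commuting right-handed Dehn twists $t_{\delta_0}t_{\delta_1}t_{\delta_2}t_{\delta_3}$ along the boundary curves of an embedded four-holed sphere in $\Sigma_2$. A single lantern substitution then replaces these four twists by $t_{x_1}t_{x_2}t_{x_3}$, yielding a positive relator $W_\theta = 1$ with $15$ Dehn twists, and hence a genus two Lefschetz fibration $f_\theta : X_\theta \to \mathbb{S}^2$ with $15$ singular fibers.

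Second, I would compute the numerical invariants. From the singular-fiber count, $e(X_\theta) = 4 - 4\cdot 2 + 15 = 11$. Classifying each of the $15$ vanishing cycles of $W_\theta$ as separating or non-separating and applying Theorem~\ref{sign} gives $\sigma(X_\theta) = -7$. Once $\pi_1(X_\theta) = \mathbb{Z}_3$ is established in the next step, finiteness of $\pi_1$ forces $b_1 = 0$, so $b_2 = 9$, whence $b^+_2 = 1$, $b^-_2 = 8$, and $c_1^2(X_\theta) = 2e + 3\sigma = 1$, matching the theorem.

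Third, the fundamental group is computed by van Kampen as $\pi_1(X_\theta) = \pi_1(\Sigma_2) / N$, where $N$ is the normal closure of the images in $\pi_1(\Sigma_2)$ of the vanishing cycles of $W_\theta$. The first $\iota^2$-factor contributes the Matsumoto relations \eqref{id:b1}--\eqref{id:b4}; the twisted second $\iota^2$-factor contributes the corresponding relations for the $f$-conjugates of the Matsumoto vanishing cycles; and the lantern substitution replaces four boundary relations by three interior ones expressed through $x_1, x_2, x_3$. After expressing all of these in the standard generators $a_1, b_1, a_2, b_2$ of $\pi_1(\Sigma_2)$ and simplifying via Tietze moves, the presentation collapses to a cyclic group of order three.

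Finally, for minimality I would apply Theorem~\ref{minimality} to the intermediate twisted fiber sum $X(2,f)$, checking that the four exceptional spheres in each Matsumoto summand meet the fiber, and hence do not contribute exceptional spheres disjoint from the fiber; then the identification of the lantern substitution with symplectic rational blowdown along a $-4$-sphere \cite{EG} ensures that $X_\theta$ remains minimal. For the Kodaira dimension, minimality together with $K_\omega^2 = c_1^2 = 1 > 0$ reduces the question to checking $K_\omega \cdot [\omega] > 0$: since $\pi_1(X_\theta)$ is finite and nontrivial, $X_\theta$ is neither rational nor ruled, so $\kappa^s(X_\theta) \geq 0$, and combined with $K_\omega^2 > 0$ this forces $\kappa^s = 2$ by Definition~\ref{sym Kod}. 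The main obstacle will be the fundamental-group calculation in the third step: although each ingredient is explicit, making the three sets of relations interact so as to simplify to $\mathbb{Z}_3$ requires a carefully chosen twisting diffeomorphism and a delicate sequence of simplifications, and a secondary difficulty is exhibiting the embedded four-holed sphere $\Sigma_0^4 \subset \Sigma_2$ whose boundary curves realise the subword to which the lantern substitution is applied.
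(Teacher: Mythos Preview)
Your outline is correct and matches the paper's approach closely: the paper specifies the twisting diffeomorphisms as $\phi = C_4^{-1}C_3^{-1}C_2^{-1}C_1^{-1}$ and $\iota\phi$, so that $\phi(\beta_0)=c_5$ and $\iota\phi(\beta_0)=c_1$, producing the contiguous subword $C_5^2C_1^2$ to which the lantern relation $C_5^2C_1^2 = C_3CB_2$ is applied; the resulting relator has $10$ nonseparating and $5$ separating vanishing cycles, and the $\pi_1$ computation proceeds exactly as you describe.

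Two small points. First, your minimality step is incomplete as stated: rational blowdown does not automatically preserve minimality, and the paper closes this by rewriting the lantern substitution as a genus-$0$ fiber sum $X_\theta = X(2,\phi,\iota\phi)\#_{S,V_{\CP}}\CP$ with $V_{\CP}$ a $+4$-sphere, then invoking Theorem~\ref{minimality} a second time (case (i) fails because $X(2,\phi,\iota\phi)$ is already minimal). Second, your Kodaira-dimension argument differs from the paper's: you use that finite nontrivial $\pi_1$ rules out rational/ruled, hence $\kappa\geq 0$, while the paper instead applies the fiber-sum monotonicity of Theorem~\ref{kod1} through the same decomposition to get $\kappa\geq 1$; both routes combine with minimality and $c_1^2>0$ to yield $\kappa=2$, and yours is arguably the more direct.
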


\begin{proof} 

Let us consider the diffeomorphism $\phi = C_{4}^{-1} C_{3}^{-1} C_{2}^{-1} C_{1}^{-1}$ of the genus two surface, where $C_{i}$ are Dehn twists diffeomorphism along the standard curves $c_i$ as shown in Figure \ref{fig:iota}. Using Figure~\ref{curves}, we verify the following equations in $\pi_{1}(\Sigma_{2})$.
\begin{figure}[ht]
\begin{center}
\includegraphics[scale=.45]{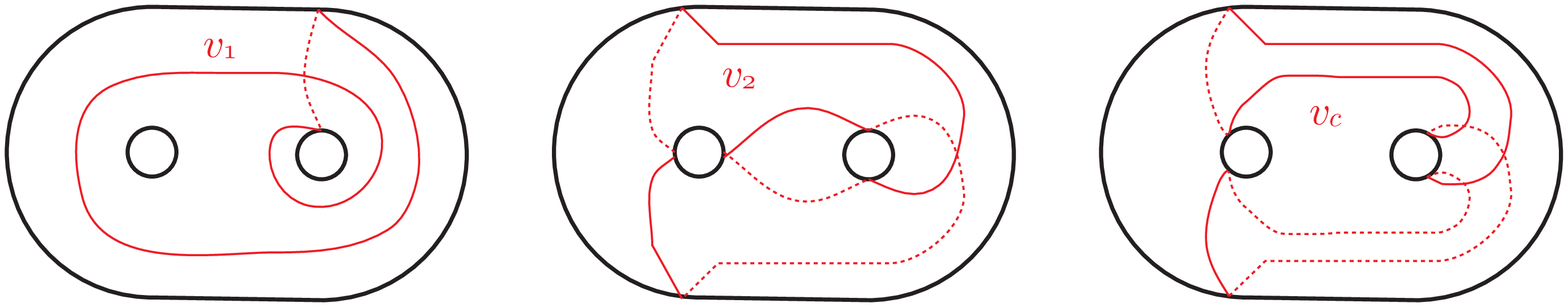}
\caption{Curves $v_1$, $v_2$, $v_c$}
\label{curves}
\end{center}
\end{figure}

\begin{align}
v_{0}:= \phi(\beta_{0}) &= c_{5} = a_{2} \label{eq:4}\\
v_{1}:= \phi(\beta_{1}) &= b_{1}b_{2}a_{2}b_{2} \label{eq:5}\\
v_{2}:= \phi(\beta_{2}) &= b_{1}b_{2}a_{2}a_{1}^{-1}a_{2}b_{2}a_{1} \label{eq:6}\\
v_{c}:= \phi(c) &= b_{1}b_{2}a_{2}a_{1}^{-1}b_{2}^{-1}a_{2}a_{1}^{-1}b_{1}^{-1} \label{eq:7}
\end{align}

\begin{figure}[ht]
\begin{center}
\includegraphics[scale=.63]{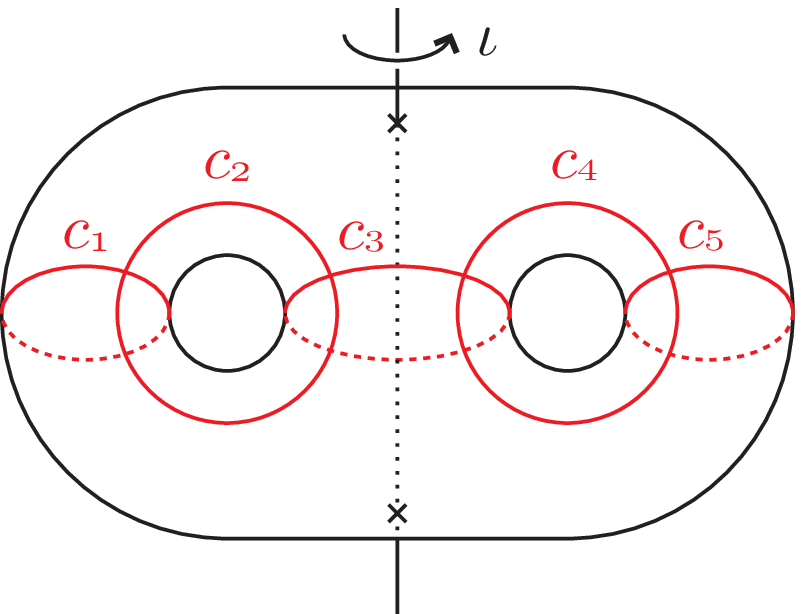}
\caption{Involution $\iota$ with two fixed points}
\label{fig:iota}
\end{center}
\end{figure}

From the description of the map $\iota$, it is easy to check that the followings equation also holds in $\pi_{1}(\Sigma_2)$. 

\begin{align}
w_{0}:= \iota(\phi(\beta_{0})) &= c_{1} = a_{1} \label{eq:8}\\
w_{1}:= \iota(\phi(\beta_{1})) &= b_{2}b_{1}a_{1}b_{1} \label{eq:9}\\
w_{2}:= \iota(\phi(\beta_{2})) &=  b_{2}b_{1}a_{1}a_{2}^{-1}a_{1}b_{1}a_{2}\label{eq:10}\\
w_{c}:= \iota(\phi(c)) &=  b_{2}b_{1}a_{1}a_{2}^{-1}b_{1}^{-1}a_{1}a_{2}^{-1}b_{2}^{-1}\label{eq:11}
\end{align}

By conjugating the global monodromy for Matsumoto's fibration by $\phi$ and $\iota \phi$, applying Lemma~\ref{con&braid.lem} and the discussion above, we obtain the following two words 

\begin{align}
(V_{1} V_{2} V_{c})^{2} (C_{5})^{2} &= 1 \label{eq:12}\\
 (C_{1})^{2}(W_{1} W_{2} W_{c})^{2} &= 1 \label{eq:13}
\end{align}

The Lefschetz fibrations given by above relators are both isomorphic to Matsumoto's fibration, and thus each has a total space $M = \mathbb{T}^{2} \times \mathbb{S}^{2}\,\#4\CPb$. By concatenating the words \eqref{eq:12} and \eqref{eq:13}, we obtain the relation below. Notice that this concatenation in the mapping class group corresponds to the symplectic fiber summing of two copies of Matsumoto's fibration. 

\begin{align}
(V_{1} V_{2} V_{c})^{2} C_{5}^{2} C_{1}^{2}(W_{1} W_{2} W_{c})^{2} &= 1 \label{eq:14}
\end{align}

Finally, by applying the lantern relation $C_{5}^{2} C_{1}^{2} = C_{3}CB_{2}$, we obtain the following relation with $10$ nonseparating vanishing cycles and $5$ separating vanishing cycle.

\begin{align}
\theta = (V_{1} V_{2} V_{c})^{2}C_{3}CB_{2} (W_{1} W_{2} W_{c})^{2} &= 1 \label{eq:15}
\end{align}

Let $X_{\theta}$ denote the total space of the  genus two Lefschetz fibration given by the above monodromy. By applying Lemma~\ref{sign} and Euler characteristic formula for the Lefschetz fibrations, we compute the topological invariants of $X_{\theta}$ as follows:
 \begin{eqnarray*}
e(X_{\theta}) &=& e(\mathbb{S}^{2}) e(\Sigma_{2}) + \# {\emph{singular fibers}} =  2(-2) + 15 =  11 ,\\
\sigma(X_{\theta}) &=& -\frac{3}{5}s_0-\frac{1}{5}s_{1} =  -\frac{3}{5}(10) - \frac{1}{5}(5) = -7, \\
{c_1}^{2}(X_{\theta}) &:=& 2e(X_{\theta}) + 3\sigma(X_{\theta}) = 1, \\
\chi(X_{\theta}) &:=& (e(X_{\theta}) + \sigma(X_{\theta}))/4 = 1
\end{eqnarray*}

Using the Dehn twist factorization $(V_{1} V_{2} V_{c})^{2}C_{3}CB_{2} (W_{1} W_{2} W_{c})^{2} = 1$, let us prove that $\pi_1(X_{\theta})=\mathbb{Z}_{3}$. We have 
$\pi_1(X_{\theta}) = \pi_{1}(\Sigma_{2})/<v_{1}, v_{2}, v_{c}, w_{1}, w_{2}, w_{c}, c_{3}, c, \beta_{2}>$. Thus, the following relations hold in $\pi_1(X_{\theta})$:

\begin{align}
c_{3} &= a_{1}a_{2}^{-1} = 1 \label{eq:25}\\
\beta_{2} &= a_{1}ca_{2} = 1 \label{eq:26}\\
c &= [a_{1},b_{1}] = [a_{2},b_{2}] = 1 \label{eq:28}\\
v_{1} &= b_{1}b_{2}a_{2}b_{2} = 1 \label{eq:29}\\
v_{2} &= b_{1}b_{2}a_{2}a_{1}^{-1}a_{2}b_{2}a_{1} =1 \label{eq:30}\\
v_{c} &= b_{1}b_{2}a_{2}a_{1}^{-1}b_{2}^{-1}a_{2}^{-1}a_{1}b_{1}^{-1} = 1 \label{eq:31}\\
w_{1} &= b_{2}b_{1}a_{1}b_{1} = 1\label{eq:32}\\
w_{2} &=  b_{2}b_{1}a_{1}a_{2}^{-1}a_{1}b_{1}a_{2} = 1\label{eq:33}\\
w_{c} &=  b_{2}b_{1}a_{1}a_{2}^{-1}b_{1}^{-1}a_{1}^-1a_{2}b_{2}^{-1} = 1\label{eq:34}
\end{align}

\noindent where $a_{1}$, $b_{1}$, $a_{2}$, and $b_{2}$ are standard generators for the fundamental group of $\Sigma_2$.
Using the relations \eqref{eq:25}, \eqref{eq:26}, and \eqref{eq:28},  we obtain $a_1=a_2$ and ${a_1}^2=1$. Next, using the relations \eqref{eq:29}, \eqref{eq:30}, and \eqref{eq:31}, we have $a_{1} = a_{2} = 1$ and $b_{1}{b_{2}}^{2} = 1$. Similarly, the relations \eqref{eq:32}, \eqref{eq:33}, and \eqref{eq:34} yields to $b_{2}{b_{1}}^{2} = 1$. Thus, we can conclude that
\begin{align*}
\pi_1(X_{\theta}) & =\,  <b_1, b_2 \,| \, b_{1}{b_{2}}^{2}, \, b_{2}{b_{1}}^{2}> \,=\, <b_1\,|\, {b_{1}}^{3}> \, = \, \mathbb{Z}_{3} 
\end{align*}

Since we have verified that $b_{1}(X_{\theta}) = 0$, we compute $b_{2}^{+}(X_{\theta}) =  2\chi(X_{\theta}) + 2b_1(X_{\theta}) -1 = 1$.

To show that $X_{\theta}$ is symplectically minimal, let us recall that $X_{\theta}$ was obtained from $X(2, \phi, \iota \phi)$ via a single lantern substitution (i.e. by a rational blowdown surgery along $-4$ sphere (see \cite{FS1} for the definition of rational blowdown). Moreover, the rational blowdown surgery along $-4$ sphere can be viewed as the symplectic sum: we have $X_{\theta} = X(2,\phi,\iota \phi)\#_{S, V_{\CP}}\CP$, where $V_{\CP}$ an embedded +4-sphere in class of $[V_{\CP}] = 2[H] \in H_{2}(\CP,\mathbb{Z})$, and $S$ is a symplectic $-4$ sphere in $X(2\phi, \iota \phi)$. Since $X(2,\phi, \iota \phi) = M\#_{\Sigma_{2}}M$ is symplectically minimal by Theorem~\ref{minimality} (case (iii) applies), it also follows from Theorem~\ref{minimality} that $X_{\theta}$ is a minimal symplectic $4$-manifold. Finally, it follows from Theorem~\ref{kod1}, and the above symplectic fiber sum decomposition that $\kappa(X(2,\phi, \iota \phi)) \geq \kappa(X(2)) \geq max\{\kappa(M), \kappa(\Sigma_{2} )\} = 1$ and consequently, we have
$\kappa(X_{\theta}) \geq \kappa(X(2,\phi, \iota \phi)) \geq max\{\kappa(X(2)), \kappa(\CP), \kappa(\mathbb{S}^{2} )\} \geq1.$ Since  $X_{\theta}$ is symplectically minimal and ${c_1}^{2}(X_{\theta})=1$, we conclude that  $\kappa(X_{\theta})=2$. \end{proof}

The proofs of the following theorem is similar to the proof of Theorem~\ref{thm}. We will provide the details of the fundamental group computations, but omit the proofs of minimality and the  symplectic Kodaira dimension computations for the sake of brevity, which are identical to as in Theorem~\ref{thm}. 

\begin{thm}\label{thm1} There exists a genus two Lefschetz fibration $f_{\eta}: X_{\eta} \rightarrow \mathbb{S}^{2}$ with $b^{+}_{2}(X_{\eta})=1$, ${c_1}^{2}(X_{\eta})=1$ and $\pi_{1}(X_{\eta}) = \mathbb{Z}_{4}$ that can be obtained from twisted fiber sum of two copies of Matsumoto's fibration by applying a single lantern substitution to it's monodromy. Moreover, $X_{\eta}$ is minimal symplectic $4$-manifold with the symplectic Kodaira dimension $\kappa(X_{\eta}) = 2$.
\end{thm}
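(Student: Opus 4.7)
The plan is to mirror the proof of Theorem~\ref{thm} step for step, modifying only the conjugating diffeomorphisms used to twist the fiber sum of two copies of Matsumoto's fibration, so that the vanishing cycles of the resulting Lefschetz fibration impose relations cutting out $\mathbb{Z}_4$ rather than $\mathbb{Z}_3$ from the fundamental group. Starting from two copies of the Matsumoto relator $(B_0 B_1 B_2 C)^2 = 1$, I will choose diffeomorphisms $\psi_1, \psi_2 \in M_2$ (close variants of the $\phi = C_4^{-1}C_3^{-1}C_2^{-1}C_1^{-1}$ and $\iota\phi$ of Theorem~\ref{thm}), and apply Lemma~\ref{con&braid.lem} together with the identities \eqref{eq:1}--\eqref{eq:3} to bring the two conjugated relators into the form
\[
(\widetilde V_1 \widetilde V_2 \widetilde V_c)^{2}\, C_5^{\,2} = 1 \qquad \text{and} \qquad C_1^{\,2}\,(\widetilde W_1 \widetilde W_2 \widetilde W_c)^{2} = 1.
\]
Concatenating these relators (which geometrically realizes a twisted fiber sum $M\#_{\Sigma_2}M$) and substituting the lantern relation $C_5^{\,2} C_1^{\,2} = C_3 C B_2$ will produce the positive factorization
\[
\eta \;=\; (\widetilde V_1 \widetilde V_2 \widetilde V_c)^{2}\, C_3 C B_2\, (\widetilde W_1 \widetilde W_2 \widetilde W_c)^{2} \;=\; 1,
\]
whose associated Lefschetz fibration is $f_\eta : X_\eta \to \mathbb{S}^2$.

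Since $\eta$ will have the same count of vanishing cycles as $\theta$ ($10$ nonseparating and $5$ separating), Theorem~\ref{sign} together with the Euler characteristic formula for Lefschetz fibrations will immediately yield $e(X_\eta) = 11$, $\sigma(X_\eta) = -7$, $c_1^{\,2}(X_\eta) = 1$, and $\chi(X_\eta) = 1$, with no further work required.

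The main obstacle is choosing $\psi_1, \psi_2$ so that $\pi_1(X_\eta) \cong \mathbb{Z}_4$. As in Theorem~\ref{thm}, the three vanishing cycles $c_3$, $c$, $\beta_2$ introduced by the lantern substitution will contribute $c_3 = a_1 a_2^{-1} = 1$, $c = [a_1,b_1] = 1$, and $\beta_2 = a_1 c a_2 = 1$, forcing $a_1 = a_2$ with $a_1^{\,2} = 1$; the two packets of Matsumoto-type cycles $\widetilde V_i$, $\widetilde W_i$ must then, via identities \eqref{id:b1}--\eqref{id:b4} transported through $\psi_1, \psi_2$, produce further relations that both kill off the $a_i$-generators entirely and reduce the $b$-generators to a single cyclic relation of order $4$. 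Concretely, one would like $\psi_1, \psi_2$ chosen so that the words representing $\widetilde V_1$ and $\widetilde W_1$ in $\pi_1(\Sigma_2)$ read, modulo the other relations, as a symmetric pair whose product collapses to $\langle b \mid b^{\,4}\rangle \cong \mathbb{Z}_4$ rather than the $\mathbb{Z}_3$-pattern $\{b_1 b_2^{\,2},\,b_2 b_1^{\,2}\}$ of Theorem~\ref{thm}. Pinning down the correct $\psi_1, \psi_2$ and checking that no extra relations appear is the technical heart of the proof. Once $\pi_1(X_\eta) = \mathbb{Z}_4$ is established, $b_1(X_\eta) = 0$ gives $b_2^+(X_\eta) = 2\chi(X_\eta) + 2b_1(X_\eta) - 1 = 1$.

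Finally, the minimality of $X_\eta$ and the computation $\kappa(X_\eta) = 2$ will follow verbatim from the corresponding arguments in Theorem~\ref{thm}: the twisted fiber sum $X(2,\psi_1,\psi_2) = M\#_{\Sigma_2}M$ is symplectically minimal by Theorem~\ref{minimality}(iii), and $X_\eta$ is obtained from it by a symplectic rational blowdown along a $-4$ sphere, which is realized as a symplectic sum with $\CP$ along a $+4$ sphere of class $2H$; Theorem~\ref{minimality} then preserves minimality. Combined with $c_1^{\,2}(X_\eta) = 1 > 0$ and Theorem~\ref{kod1}, this forces $\kappa(X_\eta) = 2$, completing the plan.
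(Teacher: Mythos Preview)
Your proposal leaves the essential step unfinished: you never specify $\psi_1,\psi_2$, and you explicitly acknowledge that ``pinning down the correct $\psi_1,\psi_2$'' is ``the technical heart of the proof.'' Without them there is no proof, and moreover the structural template you commit to makes this search harder than necessary. By insisting that both conjugated relators end in $C_5^{\,2}$ and $C_1^{\,2}$, you force the lantern to be $C_5^{\,2}C_1^{\,2}=C_3 C B_2$, so the three lantern cycles you introduce are exactly $c_3,c,\beta_2$---the \emph{same} three as in Theorem~\ref{thm}. Those impose $a_1=a_2$ and $a_1^{\,2}=1$, and now the entire burden of distinguishing $\mathbb{Z}_4$ from $\mathbb{Z}_3$ falls on the unspecified $\psi_1,\psi_2$; you give no evidence that such diffeomorphisms with $\psi_1(\beta_0)=c_5$, $\psi_2(\beta_0)=c_1$ and the required $\pi_1$-behavior exist.

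The paper does something different and more concrete. It keeps the second factor from Theorem~\ref{thm} (conjugation by $\iota\phi$, contributing $C_1^{\,2}(W_1W_2W_c)^2$), but for the first factor uses $\psi=C_5 C_3^{-1}C_2^{-1}C_1^{-1}$, which sends $\beta_0$ to $c_4$ rather than $c_5$. The concatenated relator therefore contains $C_4^{\,2}C_1^{\,2}$, and the lantern applied is the conjugate relation $C_4^{\,2}C_1^{\,2}=BCA$ (obtained from $C_5^{\,2}C_1^{\,2}=C_3CB_2$ by conjugating with $C_5^{-1}C_4^{-1}$). The new lantern cycles $a=a_1b_2$ and $b=b_1b_2b_1^{-1}a_1^{-1}$ now give $b_2=a_1^{-1}$ and $a_1^{\,2}=1$ (not $a_1=a_2$), and together with the $x_i$- and $w_i$-relations one obtains $\langle a_2,b_1\mid b_1a_2^{\,2},\,b_1^{\,2}\rangle\cong\mathbb{Z}_4$. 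So the mechanism producing $\mathbb{Z}_4$ is a change in the lantern itself, not just in the twisting diffeomorphisms; your plan overlooks this degree of freedom. Your treatment of the invariants, minimality, and Kodaira dimension is fine and matches the paper.
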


\begin{proof} We consider the diffeomorphism $\psi = C_{5}C_{3}^{-1}C_{2}^{-1}C_{1}^{-1}$ of the genus two surface. Using Figure~\ref{curves}, it easy to verify the following equations hold in $\pi_{1}(\Sigma_{2})$.

\begin{align}
x_{0}:= \psi(\beta_{0}) &= c_{4}=b_2 \label{eq1:4}\\
x_{1}:= \psi(\beta_{1}) &= b_{1}b_{2}a_{2}a_{2} \label{eq1:5}\\
x_{2}:= \psi(\beta_{2}) &= b_{1}b_{2}a_{2}{b_2}^{-1}{a_1}^{-1}a_{2}a_{1} \label{eq1:6}\\
x_{c}:= \psi(c) &= {a_1}^{-1}{a_2}^{-1}a_{1}b_{2}a_{2}{b_2}^{-1} \label{eq1:7}\\
a:= C_5^{-1}C_4^{-1}(\beta_{2}) &= a_{1}b_{2} \label{eq1:8}\\
b:= C_5^{-1}C_4^{-1}(c_{3}) &= b_{1}b_{2}{b_1}^{-1}{a_1}^{-1} \label{eq1:9}
\end{align}

\begin{figure}[ht]
\begin{center}
\includegraphics[scale=.45]{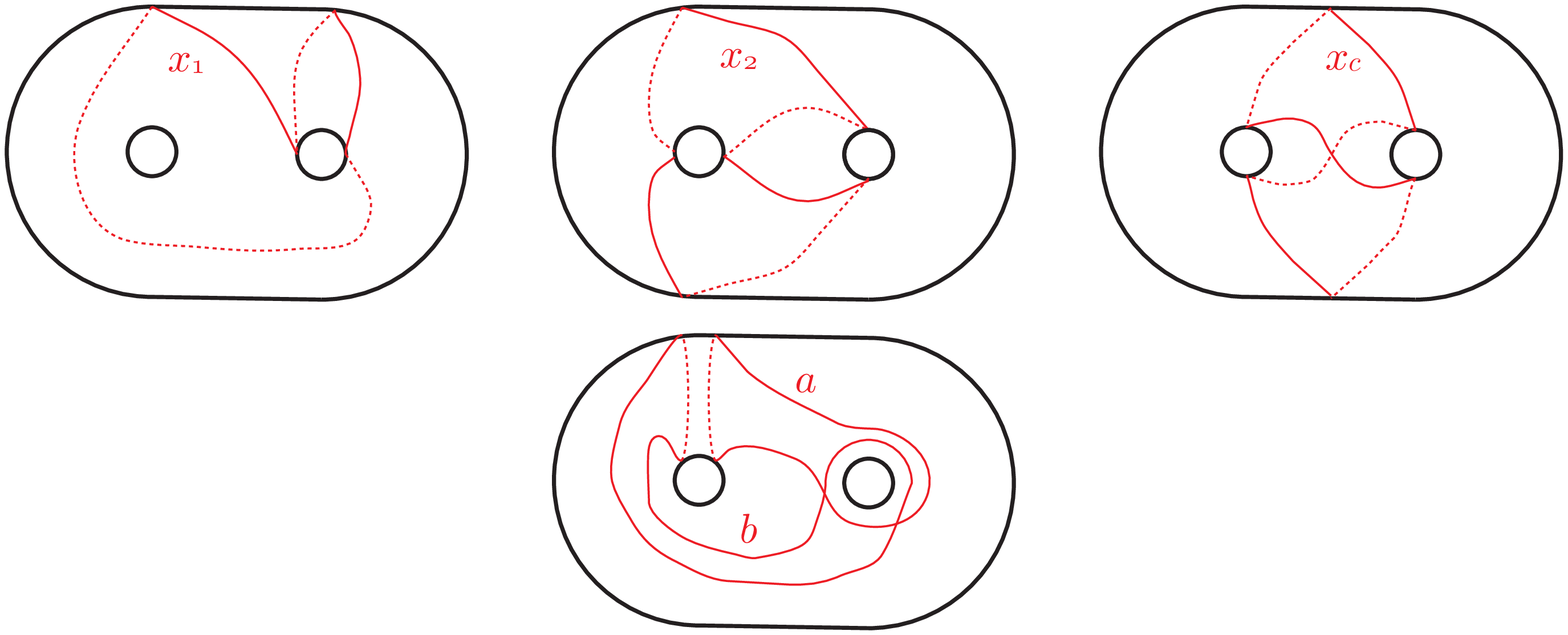}
\caption{Curves $x_1$, $x_2$, $x_c$, $a$ and $b$}
\label{curves}
\end{center}
\end{figure}



We conjugating the global monodromy of Matsumoto's fibration by $\psi$ and apply Lemma~\ref{con&braid.lem} to obtain the following word 
\begin{align}
(X_{1} X_{2} X_{c})^{2} (C_{4})^{2} &= 1 \label{eq2:1}
\end{align}

Next, we concatenate the words \eqref{eq:13} and \eqref{eq2:1} to obtain the relation given below. 
\begin{align}
(X_{1} X_{2} X_{c})^{2} C_{4}^{2} C_{1}^{2}(W_{1} W_{2} W_{c})^{2} &= 1 \label{eq2:2}
\end{align}

Finally, by applying the lantern relation $C_{4}^{2} C_{1}^{2} = B C A$, which is obtained by conjugating the lantern relation $C_{5}^{2} C_{1}^{2} = C_{3}CB_{2}$ by $C_{5}^{-1}C_4^{-1}$ and the equations (\ref{eq1:8}) and (\ref{eq1:9}), we obtain the following relation which has $10$ nonseparating vanishing cycles and $5$ separating vanishing cycle.

\begin{align}
\eta = (X_{1} X_{2} X_{c})^{2} B C A (W_{1} W_{2} W_{c})^{2} &= 1 \label{eq2:3}
\end{align}

Let $X_{\eta}$ denote the total space of the  genus two Lefschetz fibration given by the above monodromy. Next, let us compute the topological invariants of $X_{\eta}$ as given below:
 \begin{eqnarray*}
e(X_{\eta}) &=& e =  2(-2) + 15 =  11 ,\\
\sigma(X_{\eta}) &=& -\frac{3}{5}s_0-\frac{1}{5}s_{1} =  -\frac{3}{5}(10) - \frac{1}{5}(5) = -7, \\
{c_1}^{2}(X_{\eta}) &:=& 2e(X_{\eta}) + 3\sigma(X_{\eta}) = 1, \\
\chi(X_{\eta}) &:=& (e(X_{\eta}) + \sigma(X_{\eta}))/4 = 1
\end{eqnarray*}

\noindent In what follows, we use the Dehn twist factorization $(X_{1} X_{2} X_{c})^{2} B C A (W_{1} W_{2} W_{c})^{2} = 1$, to prove that $\pi_1(X_{\eta})=\mathbb{Z}_{4}$. We have 
$\pi_1(X_{\eta}) = \pi_{1}(\Sigma_{2})/<x_1, x_2, x_c, w_{1}, w_{2}, w_{c}, a, b, c>$. Thus, the following relations hold in $\pi_1(X_{\eta})$:

\begin{align}
a &= a_{1}b_{2} = 1 \label{eq2:4}\\
b &= b_{1}b_{2}{b_1}^{-1}{a_1}^{-1} = 1 \label{eq2:5}\\
c &= [a_{1},b_{1}] = 1 \label{eq2:6}\\
x_{1} &= b_{1}b_{2}a_{2}a_{2} \label{eq2:7}\\
x_{2} &= b_{1}b_{2}a_{2}{b_2}^{-1}{a_1}^{-1}a_{2}a_{1} \label{eq2:8}\\
x_{c} &= {a_1}^{-1}{a_2}^{-1}a_{1}b_{2}a_{2}{b_2}^{-1} \label{eq2:9}\\
w_{1} &= b_{2}b_{1}a_{1}b_{1} = 1\label{eq2:10}\\
w_{2} &=  b_{2}b_{1}a_{1}a_{2}^{-1}a_{1}b_{1}a_{2} = 1\label{eq2:11}\\
w_{c} &=  b_{2}b_{1}a_{1}a_{2}^{-1}b_{1}^{-1}a_{1}^-1a_{2}b_{2}^{-1} = 1\label{eq2:12}
\end{align}

\noindent where $a_{1}$, $b_{1}$, $a_{2}$, and $b_{2}$ are standard generators for the fundamental group of $\Sigma_2$. Using the relations \eqref{eq2:4}, \eqref{eq2:5}, and \eqref{eq2:6}, we obtain $b_2=a_1^{-1}$ and ${a_1}^2=1$. Next, using the relations \eqref{eq2:7}, \eqref{eq2:8}, and \eqref{eq2:9}, we have $a_{1} = b_{2} = 1$ and $b_{1}{a_{2}}^{2} = 1$. Similarly, the relations \eqref{eq2:10}, \eqref{eq2:11}, and \eqref{eq2:12} yield to ${b_{1}}^{2} = 1$. Thus, we can conclude that
\begin{align*}
\pi_1(X_{\eta}) & =\,  <a_2, b_1 \,| \, b_{1}{a_{2}}^{2}, \, {b_{1}}^{2}> \,=\, <a_2\,|\, {a_{2}}^{4}> \, = \, \mathbb{Z}_{4} 
\end{align*}

Since $b_{1}(X_{\eta}) = 0$, we compute $b_{2}^{+}(X_{\eta}) =  2\chi(X_{\eta}) + 2b_1(X_{\eta}) -1 = 1$. \end{proof}

\begin{thm}\label{thm2} Let $n$ be any integer. There exists a family of Lefschetz fibration $f_{n}: X_{\theta_n} \rightarrow \mathbb{S}^{2}$ with $e(X_{\theta_n})=11$, $b^{+}_{2}(X_{\theta_n})=1$, ${c_1}^{2}(X_{\theta_n})=1$, and

\begin{enumerate}[(a)]
\item $\pi_{1}(X_{\theta_n}) = 1$ for $n = -1, -3$
\item $\pi_{1}(X_{\theta_n}) = \mathbb{Z}$ for $n=-2$
\item $\pi_{1}(X_{\theta_n}) = \mathbb{Z}_{|n+2|}$ for $n \neq -3, -2, -1$
\end{enumerate}

\noindent that can be obtained from twisted fiber sum of two copies of Matsumoto's fibration by applying a single lantern substitution to it's monodromy. Moreover, $X_{\theta_n}$ is minimal symplectic $4$-manifold with the symplectic Kodaira dimension $\kappa(X_{\theta_n}) = 2$.
\end{thm}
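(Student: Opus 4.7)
The plan is to extend Theorems~\ref{thm} and~\ref{thm1} by replacing the single conjugating diffeomorphism used there with a one-parameter family $\phi_n$ of diffeomorphisms of $\Sigma_2$ depending on the integer $n$. A natural choice, interpolating between $\phi = C_{4}^{-1}C_{3}^{-1}C_{2}^{-1}C_{1}^{-1}$ of Theorem~\ref{thm} and $\psi = C_{5}C_{3}^{-1}C_{2}^{-1}C_{1}^{-1}$ of Theorem~\ref{thm1}, is obtained by inserting an appropriate power of a Dehn twist (suggested by the fact that $\psi\phi^{-1} = C_{5}C_{4}$) so that $\phi_n$ shifts the image of $\beta_0$ in $\pi_1(\Sigma_2)$ in an $n$-dependent way, while still arranging for the concatenated fiber-sum word to contain a lantern subword (a suitable conjugate of $C_{5}^{2}C_{1}^{2} = C_{3}CB_{2}$). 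Conjugating one copy of Matsumoto's monodromy $(B_{0}B_{1}B_{2}C)^{2}=1$ by $\phi_n$, conjugating another copy by $\iota\phi$, concatenating, and then applying a single lantern substitution produces a genus two Lefschetz fibration $f_n \colon X_{\theta_n} \to \mathbb{S}^2$ with $10$ nonseparating and $5$ separating vanishing cycles, for every $n$.

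Because the total count and the separating/nonseparating type of the vanishing cycles are independent of $n$, Theorem~\ref{sign} and the Euler characteristic formula give $e(X_{\theta_n})=11$, $\sigma(X_{\theta_n})=-7$, $c_1^2(X_{\theta_n})=1$, and $\chi(X_{\theta_n})=1$ uniformly, exactly as in the computations of Theorems~\ref{thm} and~\ref{thm1}. The minimality and Kodaira dimension statements also transport verbatim: $X_{\theta_n}$ is a rational blowdown of the twisted fiber sum $X(2,\phi_n,\iota\phi)$ along a symplectic $-4$-sphere, equivalently a symplectic fiber sum of $X(2,\phi_n,\iota\phi)$ with $\CP$ along the $+4$-sphere in class $2[H]$, so minimality follows from Theorem~\ref{minimality}(iii) applied first to $M\#_{\Sigma_2}M$ and then to the rational blowdown, and $\kappa(X_{\theta_n}) = 2$ then follows from Theorem~\ref{kod1} together with $c_1^2(X_{\theta_n}) > 0$.

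The main obstacle is the fundamental group computation, since all the $n$-dependence concentrates there. I would express each vanishing cycle of the conjugated monodromy as a word in the standard generators $a_1, b_1, a_2, b_2$ of $\pi_1(\Sigma_2)$, following the pattern of the $v_i, w_i$ computations in Theorems~\ref{thm} and~\ref{thm1}, with $n$ now entering as an exponent in the relations coming from the $\phi_n$-conjugated half. The relations from the separating vanishing cycles ($c$, $\beta_2$, and their conjugates coming from the lantern substitution) should force $a_1 = a_2$ and kill the commutators, while the nonseparating relations collapse everything to a single generator $g$ subject to one cyclic relator $g^{n+2}=1$. This gives $\pi_1(X_{\theta_n}) = \mathbb{Z}_{|n+2|}$ when $n\neq -2$, the infinite cyclic group when $n=-2$ (the relator becomes trivial), and the trivial group when $|n+2|=1$, i.e.\ $n=-1$ or $-3$. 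As a consistency check, the two specific values of $n$ recovering the diffeomorphisms $\phi$ and $\psi$ should reproduce $\mathbb{Z}_3$ and $\mathbb{Z}_4$ respectively, matching Theorems~\ref{thm} and~\ref{thm1}. The hard part is matching the abstract $n$-dependence of $\phi_n$ with the concrete exponent appearing in the final cyclic relator; this requires careful bookkeeping of how the inserted Dehn twist power propagates through the lantern substitution and the abelianization/relation-collapse in $\pi_1$.
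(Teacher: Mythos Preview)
Your overall strategy---introduce $n$-dependence through a one-parameter family of conjugating diffeomorphisms, concatenate, apply a single lantern substitution, then read off $\pi_1$---is exactly the paper's. The invariant, minimality, and Kodaira-dimension arguments do transfer verbatim as you say. However, your specific construction diverges from the paper's, and the divergence matters for the $\pi_1$ computation you yourself flag as the hard part.

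The paper does \emph{not} interpolate between $\phi$ and $\psi$ via powers of $C_5C_4$, nor does it keep $\iota\phi$ as the second conjugator. Instead it fixes the $\psi$-conjugated copy from Theorem~\ref{thm1} (contributing the subword $C_4^2$) and takes the \emph{other} conjugating map to be $\phi_n = C_1^{\,n}C_4$. The companion map $\psi_n = C_1^{\,n}C_5^{-1}C_4^{-1}C_3^{-1}C_2^{-1}C_1^{-1}$ is used only to transport the lantern relation: $\psi_n(c_5)=c_4$, $\psi_n(c_1)=\phi_n(\beta_0)=:y_{0,n}$, so conjugating $C_5^2C_1^2=C_3CB_2$ by $\psi_n$ gives $C_4^2Y_{0,n}^2=Z_{3,n}Z_{c,n}Z_{2,n}$. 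The point of inserting $C_1^{\,n}$ (rather than a power of $C_5C_4$) is that $c_1=a_1$, so the twist power injects a clean $a_1^{\,n}$ into the relators: one finds $z_{3,n}=b_1a_1^{\,n}$, while the $x_i$-relations from the $\psi$-half give $b_1a_1^{-2}=1$, $b_2=1$, $a_1a_2=1$. Combining $b_1=a_1^{-n}$ with $b_1=a_1^{2}$ yields the single relator $a_1^{\,n+2}=1$.

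Your $(C_5C_4)$-based interpolation would act nontrivially on $a_2,b_2$ and is unlikely to deposit a clean power of one generator into a single relator; and your expectation that ``the separating cycles force $a_1=a_2$'' is the pattern of Theorem~\ref{thm}, not of the general family (where one gets $a_1a_2=1$ instead). So while the plan is sound in outline, the concrete mechanism you propose for the $n$-dependence is not the one that makes the $\pi_1$ computation collapse to $\langle a_1\mid a_1^{\,n+2}\rangle$; the paper's choice of $C_1^{\,n}$ is the key device you are missing.
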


\begin{proof}  Let us consider the diffeomorphisms $\psi_n = C_{1}^{n} C_{5}^{-1} C_{4}^{-1} C_{3}^{-1} C_{2}^{-1} C_{1}^{-1}$ and $\phi_{n} = C_1^{n} C_4$ of the genus two surface, where $C_{i}$ are Dehn twists diffeomorphism along the standard curves $c_i$ as shown in Figure \ref{fig:iota}. Using Figure~\ref{curves}, it is easy to verify the following equations in $\pi_{1}(\Sigma_{2})$.

\begin{figure}[ht]
\begin{center}
\includegraphics[scale=.45]{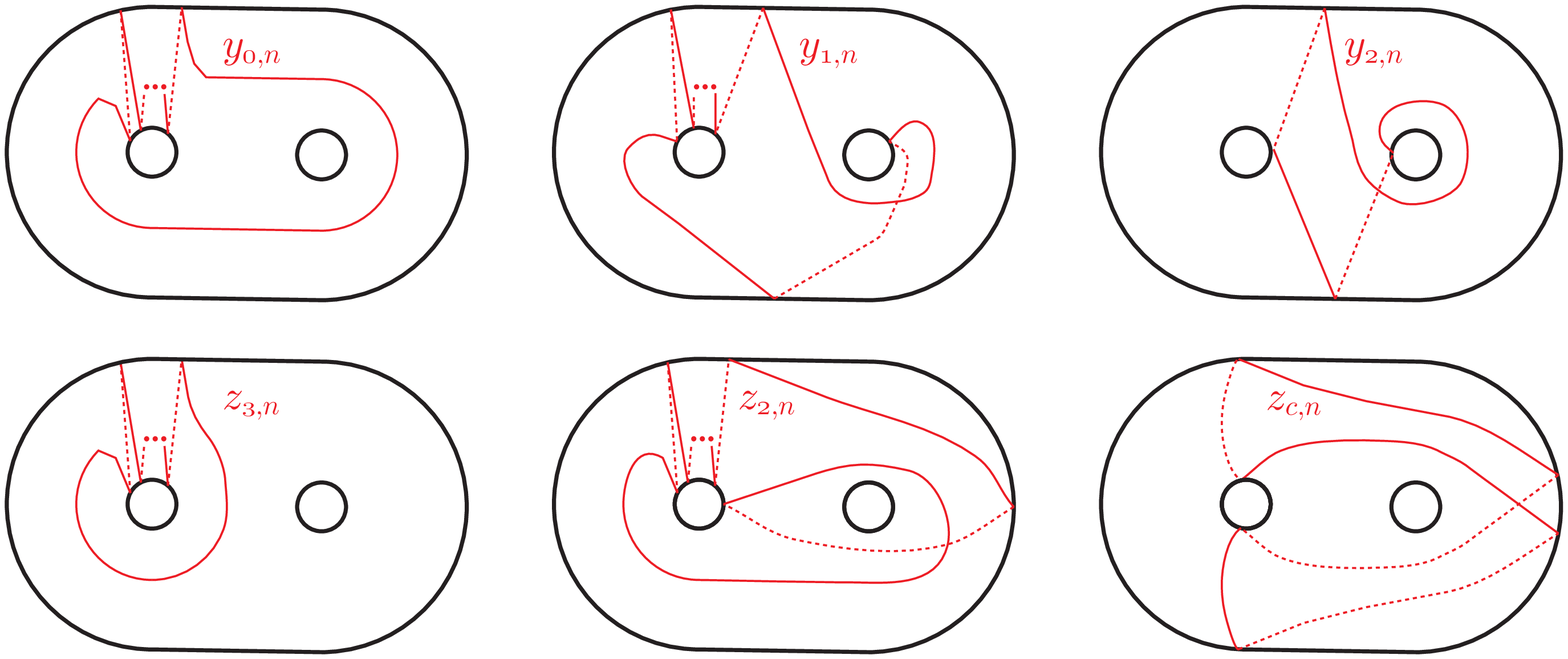}
\caption{Curves $y_{0,n}$, $y_{1,n}$, $y_{2,n}$, $z_{3,n}$, $z_{2,n}$ and $z_{c,n}$}
\label{fig:iota}
\end{center}
\end{figure}

\begin{align}
y_{0,n}:= \phi_{n}(\beta_{0}) &= \psi_{n}(c_1) = b_{1}b_{2}{a_1}^n \label{eq2:13}\\
y_{1,n}:= \phi_{n}(\beta_{1}) &= b_{1}b_{2}a_{2}b_{2}^{-1}{a_1}^{n+1} \label{eq2:14}\\
y_{2,n}:= \phi_{n}(\beta_{2}) &= b_{2}{a_{2}}^{-1}{c}^{-1}{a_{1}}^{-1} \label{eq2:15}\\
y_{c,n}:= \phi_{n}(c) &= c = [a_{1},b_{1}] \label{eq2:16}\\
z_{5,n}:= \psi_{n}(c_5) &= c_4 = b_2 \label{eq2:17}\\
z_{3,n}:= \psi_{n}(c_3) &= b_1{a_1}^n \label{eq2:18}\\
z_{2,n}:= \psi_{n}(\beta_2) &= b_{1}b_{2}{a_1}^{n+1}b_{1}b_{2}{b_{1}}^{-1}{a_{1}}^{-1} \label{eq2:19}\\
z_{c,n}:= \psi_{n}(c) &= b_{1}b_{2}a_{1}{b_{2}}^{-1}{b_{1}}^{-1}{a_{1}}^{-1} \label{eq2:20}
\end{align}




By conjugating the global monodromy for Matsumoto's fibration by $\phi_n$, applying Lemma~\ref{con&braid.lem}, we obtain the following word 
\begin{align}
(Y_{0,n})^{2} (Y_{1,n} Y_{2,n} C)^{2} &= 1 \label{eq2:21}
\end{align}

Since the Lefschetz fibrations given by the relators (\ref{eq2:1}) and (\ref{eq2:21}) are both isomorphic to Matsumoto's fibration, each has a total space $M = \mathbb{T}^{2} \times \mathbb{S}^{2}\,\#4\CPb$. Next, we concatenate the words \eqref{eq2:1} and \eqref{eq2:21} to obtain the relation below. 
\begin{align}
(X_{1} X_{2} X_{c})^{2} C_{4}^{2} Y_{0,n}^{2} (Y_{1,n} Y_{2,n} C)^{2} &= 1 \label{eq2:22}
\end{align}

Finally, by applying the lantern relation $C_{4}^{2} Y_{0,n}^{2} = Z_{3,n} Z_{c,n} Z_{2,n}$, which is obtained by conjugating the lantern relation $C_{5}^{2} C_{1}^{2} = C_{3}CB_{2}$ by $\psi_n$ and the equations (\ref{eq2:13}), (\ref{eq2:17})--(\ref{eq2:20}), we obtain the following relation with $10$ nonseparating vanishing cycles and $5$ separating vanishing cycle.

\begin{align}
\theta_n = (X_{1} X_{2} X_{c})^{2} Z_{3,n} Z_{c,n} Z_{2,n} (Y_{1,n} Y_{2,n} C)^{2} &= 1 \label{eq2:23}
\end{align}

Let $X_{\theta_n}$ denote the total space of the genus two Lefschetz fibration given by the monodromy $\theta_n =1$. Applying Lemma~\ref{sign} and Euler characteristic formula for the Lefschetz fibrations, we compute the topological invariants of $X_{\theta_n}$ as follows:
 \begin{eqnarray*}
e(X_{\theta_n}) &=& 2(-2) + 15 =  11 ,\\
\sigma(X_{\theta_n}) &=& -\frac{3}{5}s_0-\frac{1}{5}s_{1} =  -\frac{3}{5}(10) - \frac{1}{5}(5) = -7, \\
{c_1}^{2}(X_{\theta_n}) &:=& 2e(X_{\theta_n}) + 3\sigma(X_{\theta_n}) = 1, \\
\chi(X_{\theta_n}) &:=& (e(X_{\theta_n}) + \sigma(X_{\theta_n}))/4 = 1
\end{eqnarray*}

\noindent To compute $\pi_1(X_{\theta_n})$, we use word
$(X_{1} X_{2} X_{c})^{2} Z_{3,n} Z_{c,n} Z_{2,n} (Y_{1,n} Y_{2,n} C)^{2} = 1$. We have 
\begin{align*}
\pi_1(X_{\theta_n}) = \pi_{1}(\Sigma_{2})/<y_{1,n}, y_{2,n}, c, z_{3,n}, z_{c,n}, z_{2,n}, x_{1}, x_{2}, x_{c}>.
\end{align*}
Thus, the following relations hold in $\pi_1(X_{\theta_n})$:

\begin{align}
z_{3,n} &= b_1{a_1}^n = 1 \label{eq2:24}\\
z_{2,n} &= b_{1}b_{2}{a_1}^{n+1}b_{1}b_{2}{b_{1}}^{-1}{a_{1}}^{-1} = 1 \label{eq2:25}\\
z_{c,n} &= b_{1}b_{2}a_{1}{b_{2}}^{-1}{b_{1}}^{-1}{a_{1}}^{-1} = 1 \label{eq2:26}\\
y_{1,n} &= b_{1}b_{2}a_{2}{b_{2}}^{-1}{a_1}^{n+1} = 1 \label{eq2:27}\\
y_{2,n} &= b_{2}{a_{2}}^{-1}{c}^{-1}{a_{1}}^{-1} = 1 \label{eq2:28}\\
c &= [a_{1},b_{1}] = 1 \label{eq2:29}\\
x_{1} &= b_{1}b_{2}a_{2}a_{2} = 1 \label{eq2:30}\\
x_{2} &= b_{1}b_{2}a_{2}{b_2}^{-1}{a_1}^{-1}a_{2}a_{1} = 1 \label{eq2:31}\\
x_{c} &= {a_1}^{-1}{a_2}^{-1}a_{1}b_{2}a_{2}{b_2}^{-1} = 1 \label{eq2:32}
\end{align}

\noindent where $a_{1}$, $b_{1}$, $a_{2}$, and $b_{2}$ are standard generators for the fundamental group of $\Sigma_2$.
Using the relations \eqref{eq2:24}, \eqref{eq2:25}, and \eqref{eq2:26},  we obtain ${a_1}^{n}b_{1}=1$ and ${b_2}^2=1$. Next, using the relations \eqref{eq2:27}, \eqref{eq2:28}, and \eqref{eq2:29}, we have $b_{2} = 1$ and $a_{1}a_{2} = 1$. Similarly, the relations \eqref{eq:32}, \eqref{eq:33}, and \eqref{eq:34} yields to $b_{1}{a_{1}}^{-2} = 1$. Thus, we can conclude that
\begin{align*}
\pi_1(X_{\theta_n}) & =\,  <a_{1}, a_{2}, b_{1} \,| \, a_{1}^{n}b_{1}, \, a_{1}a_{2}, \, b_{1}{a_{1}}^{-2}> \,=\, <a_1\,|\, {a_{1}}^{n+2}> \, 
\end{align*}

Notice that when $n= -3$ or $-1$, the group given by the above presentation is trivial. If $n = -2$, then $\pi_1(X_{\theta_n}) = \, \mathbb{Z}$, which is generated by $a_1$. When $n \neq -3, -2, -1$, then $\pi_1(X_{\theta_n}) = \, \mathbb{Z}_{|n+2|}$ non-trivial finite cyclic group of order $|n+2|$.
Since we have verified that $b_{1}(X_{\theta_n}) = 0$ for $n \neq -2$, we compute $b_{2}^{+}(X_{\theta_n}) =  2\chi(X_{\theta_n}) + 2b_1(X_{\theta_n}) -1 = 1$. For $n=-2$, we have $b_{1}(X_{\theta_n}) = 1$ and $b_{2}^{+}(X_{\theta_n}) =  2\chi(X_{\theta_n}) + 2b_1(X_{\theta_n}) -1 = 3$. Thus, $X_{\theta_{-2}}$ is non K\"{a}hler symplectic $4$-manifold.

To show $X_{\theta_n}$ is symplectically minimal, we proceed as in the proof of Theorem~\ref{thm}. We will use the facts that $X_{\theta_n}$ was obtained from $X(2, \phi_n, \psi_n)$ by a single lantern substitution, $X_{\theta_n} = X(2,\phi_n, \psi_n)\#_{S, V_{\CP}}\CP$ and $X(2, \phi_n, \psi_n)$ is symplectically minimal. Using Theorem~\ref{minimality}, we deduce that $X_{\theta_n}$ is a minimal symplectic $4$-manifold. Finally, by applying Theorem~\ref{kod1}, and the above symplectic fiber sum decomposition, we have $\kappa(X(2,\phi_n, \psi_n)) \geq \kappa(X(2)) \geq max\{\kappa(M), \kappa(\Sigma_{2} )\} = 1$ and consequently, we have
$\kappa(X_{\theta_n}) \geq \kappa(X(2,\phi_n, \psi_n)) \geq max\{\kappa(X(2)), \kappa(\CP), \kappa(\mathbb{S}^{2} )\} \geq1.$ Since  $X_{\theta}$ is symplectically minimal and ${c_1}^{2}(X_{\theta_n})=1$, we have  $\kappa(X_{\theta_n})=2$. When $n=-1$ or $n=-3$, using $\kappa(X_{\theta_n})=2$ we can deduce that $X_{\theta_{-1}}$ and $X_{\theta_{-3}}$ are exotic symplectic copies of $\CP\#8\CPb$.
\end{proof}

\begin{rmk}\label{othercases} Further examples can be obtained using other lantern substitutions. We present a few additional cases here, but omit the details. Let choose the diffeomorphism $\phi=C_4C_3C_5C_4$. We have $\phi(c_5)=c_3$, and we set $u_1:=\phi(v_1)$, $u_2:=\phi(v_2)$ and $u_c:=\phi(v_c)$. Therefore, we obtain the relations $(U_{1}U_{2}U_{c})^2C_3^2(B_0)^{2}(B_{1} B_{2} B_{c})^{2}=1$ and $(U_{1}U_{2}U_{c})^2C_3^2(C_{1})^{2}(W_{1} W_{2} W_{c})^{2}=1$. Notice that we can apply lantern substitutions to these relations since $c_3$ is disjoint from $\beta_0$ and $c_1$. We have verified that the fundamental groups of the resulting Lefschetz fibrations are $\mathbb{Z}_2$ and $\mathbb{Z}_3$, respectively.
 \end{rmk}

\begin{rmk}\label{decomp} It is an interesting problem to determine if the Lefschetz fibrations $f_{\theta}: X_{\theta} \rightarrow \mathbb{S}^2$, $f_{\eta}: X_{\eta} \rightarrow \mathbb{S}^2$, and $f_{n}: X_{\theta_n} \rightarrow \mathbb{S}^2$, which all have the minimal total spaces, are fiber sum indecomposible or not. Notice that if the total spaces $X_{\theta}$, $X_{\eta}$, and $X_{\theta_n}$, which all have $n = 10$ nonseparating and $s = 5$ separating vanishing cycles, decomposes as a fiber sum $X_1 \#_{\Sigma_2} X_2$ of two Lefschetz fibrations over $\mathbb{S}^2$, then we can assume that $X_1$ has $2$ separating and $X_2$ has $3$ separating vanishing cycles. The proof rests on the following three facts: (1) there exist no Lefschetz fibration over $\mathbb{S}^2$ with only separating vanishing cycles; (2) for a genus two Lefschetz fibration over $\mathbb{S}^2$ with $n$ nonseparating and $s$ separating vanishing cycles, $(n + 2s) \equiv 0 \mod{10}$; (3) there exist no Lefschetz fibration over $\mathbb{S}^2$ with $n=2$, and $s=4$. 
Using the number of vanishing cycles and their type for $X_1$ ($n_{1}=6$ and $s_{1}=2$) and $X_2$ ($n_{2}=4$ and $s_{2}=3$), we compute their topological invariants: $e(X_1) = 4$,  $\sigma(X_1) = -4$,  $\chi(X_1) = 0$,  $c_1^2(X_1) = -4$, and $e(X_2) = 3$,  $\sigma(X_2) = -3$,  $\chi(X_2) = 0$,  $c_1^2(X_2) = -3$. Consequently, it follows from Proposition 4.1 in \cite{Sato}
that $X_1$ and $X_2$ must be diffeomorphic to $\mathbb{T}^{2}\times \mathbb{S}^{2}\,\#4\CPb$ and $\mathbb{T}^{2}\times \mathbb{S}^{2}\,\#3\CPb$, respectively. It is possibile that our fibration decomposes as the fiber sum of Matsumoto's fibration \cite{Ma1} (with $n = 6$ and $s = 2$) and Xiao's fibration \cite{Xiao}  (with $n=4$ and $s = 3$). However, the explicit monodromy of latter fibration is unknown. A few weeks after we announced our result (see the earlier versions of this preprint in arXiv), I. Baykur and M. Korkmaz posted a preprint \cite{BK} which constructs explicit monodromy of such a genus two Lefschetz fibration. It is not clear whether their fibration is related to Xiao's fibration. One can compute the monodromy of Xiao's fibration using the branched cover description outlined in Example~\ref{XF}, and applying the braid monodromy techniques of B. Moishezon. Another, interesting problem would be to determine if our fibrations $f_{\theta}: X_{\theta} \rightarrow \mathbb{S}^2$, $f_{\eta}: X_{\eta} \rightarrow \mathbb{S}^2$, and $f_{n}: X_{\theta_n} \rightarrow \mathbb{S}^2$ (for $n \neq -2$ and $-7 \leq n \leq 3$) are holomorphic or not. It is known that our fibration $f_{n}: X_{\theta_n} \rightarrow \mathbb{S}^2$ can not be holomorphic for all other $n$. In a follow-up project, we will study these problems. 

\end{rmk}

\section{The existence of a section}\label{section}
First, we show that our example in Theorem~\ref{thm} has a sphere section of square $-2$. 
Let $\widetilde{B}_0$, $\widetilde{B^\prime}_0$, $\widetilde{B}_1$, $\widetilde{B}_2$ and $\widetilde{C}$ be the Dehn twists along the simple closed curve $\widetilde{\beta}_{0}$, $\widetilde{\beta^\prime}_{0}$, $\widetilde{\beta}_{1}$, $\widetilde{\beta}_{2}$, and $\widetilde{c}$ as in Figure~\ref{fig:matsumotosc}. 
\begin{figure}[ht]
\begin{center}
\includegraphics[scale=.63]{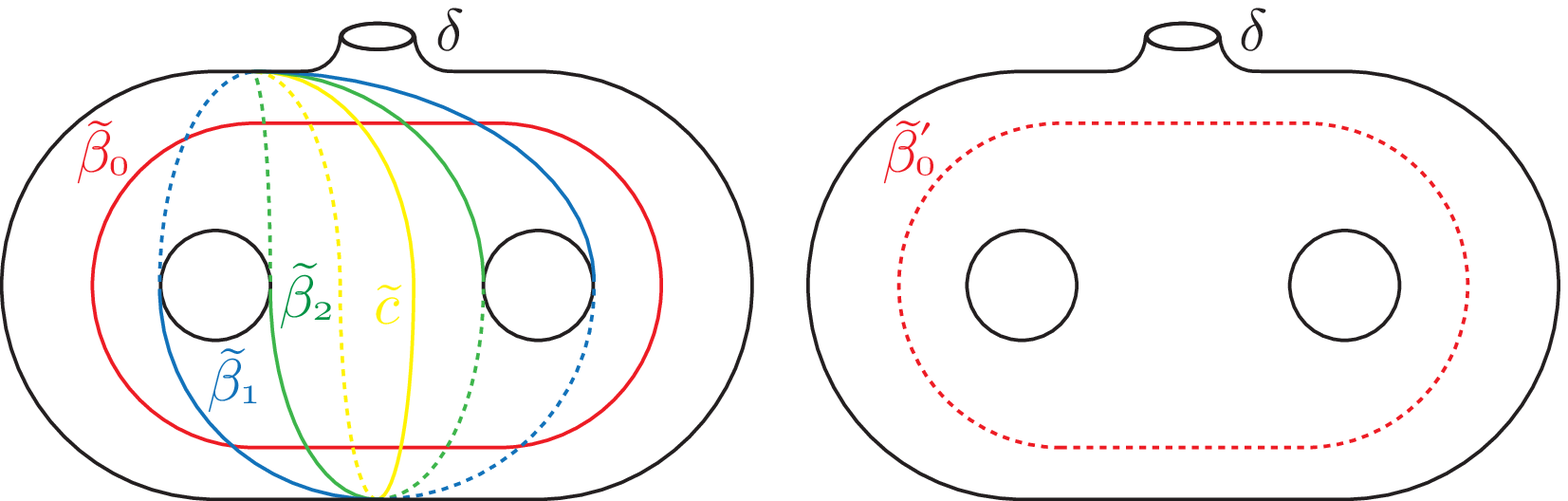}
\caption{The curves $\widetilde{\beta}_{0}$, $\widetilde{\beta^\prime}_{0}$, $\widetilde{\beta}_{1}$, $\widetilde{\beta}_{2}$, and $\widetilde{c}$}
\label{fig:matsumotosc}
\end{center}
\end{figure}
Ozbagci and Stipsicz \cite{OS} show that there is the following lift of $(B_0B_1B_2C)^2=1$ in $M_2$ to $M_2^1$:
\begin{align}\label{matsumotolift}
(\widetilde{B}_0 \widetilde{B}_1 \widetilde{B}_2 \widetilde{C})^2 = \partial,
\end{align}
where $\partial$ is the Dehn twist along the simple closed curve $\delta$ parallel to the boundary component of $\Sigma_2^1$. This guarantees that Matsumoto's fibration admits a sphere section of square $-1$. By drawing corresponding curves and applying the corresponding Dehn twist, we obtain the following identity. 
\begin{align*}
\widetilde{B}_0 \widetilde{B}_1 \widetilde{B}_2 \widetilde{C}(\widetilde{\beta}_0) = \widetilde{\beta^\prime}_0.
\end{align*}
Define simple closed curves to be 
\begin{align*}
&\widetilde{\beta^\prime}_1 := (\widetilde{B}_0 \widetilde{B}_1 \widetilde{B}_2 \widetilde{C})^{-1}(\widetilde{\beta}_1),& 
&\widetilde{\beta^\prime}_2 := (\widetilde{B}_0 \widetilde{B}_1 \widetilde{B}_2 \widetilde{C})^{-1}(\widetilde{\beta}_2),& 
&\widetilde{c^\prime} := (\widetilde{B}_0 \widetilde{B}_1 \widetilde{B}_2 \widetilde{C})^{-1}(\widetilde{c}).& 
\end{align*}
Since $\iota = B_0B_1B_2C = (B_0B_1B_2C)^{-1}$ leaves the curves $\beta_0,\beta_1,\beta_2$ and $c$ invariant, the Dehn twists $\widetilde{B^\prime}_0, \widetilde{B^\prime}_1, \widetilde{B^\prime}_2$ and $\widetilde{C^\prime}$ along $\widetilde{\beta^\prime}_0, \widetilde{\beta^\prime}_1, \widetilde{\beta^\prime}_2$ and $\widetilde{c^\prime}$ are lifts of $B_0, B_1, B_2$ and $C$, respectively. 
From the above we have the following identities. 
\begin{align}
\partial = \widetilde{B^\prime}_0 \widetilde{B}_0 \widetilde{B}_1 \widetilde{B}_2 \widetilde{C} \widetilde{B}_1 \widetilde{B}_2 \widetilde{C} 
= (\widetilde{B}_1 \widetilde{B}_2 \widetilde{C})^2 \widetilde{B^\prime}_0 \widetilde{B}_0 . \label{liftA} \\
\partial = \widetilde{B}_0 \widetilde{B}_0 \widetilde{B}_1 \widetilde{B}_2 \widetilde{C} \widetilde{B^\prime}_1 \widetilde{B^\prime}_2 \widetilde{C^\prime} 
= \widetilde{B}_0^2 \widetilde{B}_1 \widetilde{B}_2 \widetilde{C} \widetilde{B^\prime}_1 \widetilde{B^\prime}_2 \widetilde{C^\prime} . \label{liftB}
\end{align}
Moreover, it is easy to seen that these two identities are lifts of $B_0^2(B_1B_2C)^2=1$ and $(B_1B_2C)^2B_0^2=1$, respectively. 
\begin{figure}[ht]
\begin{center}
\includegraphics[scale=.63]{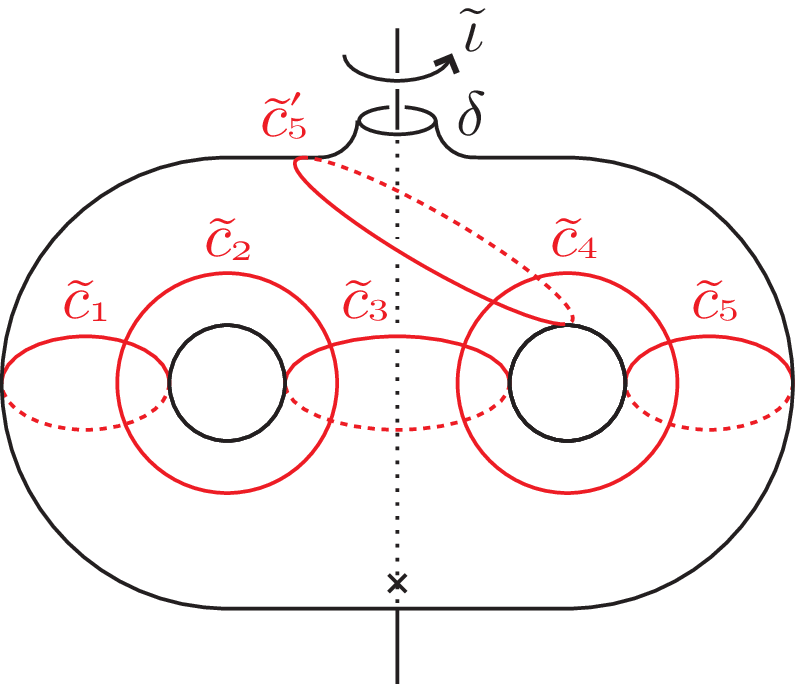}
\caption{The curves $\widetilde{c}_{1}$, $\widetilde{c}_{2}$, $\widetilde{c}_{3}$, $\widetilde{c}_{4}$, $\widetilde{c^\prime}_{4}$, $\widetilde{c}_{5}$ and $\widetilde{c^\prime}_{5}$}
\label{fig:iota3}
\end{center}
\end{figure}

Let us consider the element $\widetilde{\phi} = \widetilde{C}_4^{-1} \widetilde{C}_3^{-1} \widetilde{C}_2^{-1} \widetilde{C}_1^{-1}$ which is a lift of $\phi$ in $M_2$ to $M_2^1$, 
where $\widetilde{C}_i$ is the Dehn twist along $\widetilde{c}_i$ as shown in Figure~\ref{fig:iota3} and the element $\widetilde{\iota}=\widetilde{B}_0 \widetilde{B}_1 \widetilde{B}_2 \widetilde{C}$ which is a lift of $\iota$. 
We have
\begin{align*}
&\widetilde{v}_0 := \widetilde{\phi}(\widetilde{\beta}_0) = \widetilde{c}_5,& &\widetilde{v^\prime}_0 := \widetilde{\phi}(\widetilde{\beta^\prime}_0) = \widetilde{c}^\prime_5,& &\widetilde{w}_0 := \widetilde{\iota}\widetilde{\phi}(\widetilde{\beta}_0) = \widetilde{c}_1&
\end{align*}
and set
\begin{align*}
&\widetilde{v}_1 := \widetilde{\phi}(\widetilde{\beta}_1),& &\widetilde{v}_2 := \widetilde{\phi}(\widetilde{\beta}_2),& &\widetilde{v}_c := \widetilde{\phi}(\widetilde{c}),& \\
&\widetilde{w}_1 := \widetilde{\iota}\widetilde{\phi}(\widetilde{\beta}_1),& &\widetilde{w}_2 := \widetilde{\iota}\widetilde{\phi}(\widetilde{\beta}_2),& &\widetilde{w}_c := \widetilde{\iota}\widetilde{\phi}(\widetilde{c}),& \\
&\widetilde{w_1^\prime} := \widetilde{\iota}\widetilde{\phi}(\widetilde{\beta^\prime}_1),& &\widetilde{w_2^\prime} := \widetilde{\iota}\widetilde{\phi}(\widetilde{\beta^\prime}_2),& &\widetilde{w_c^\prime} := \widetilde{\iota}\widetilde{\phi}(\widetilde{c^\prime}).&
\end{align*}
By conjugating the relations (\ref{liftA}) and (\ref{liftB}) by $\widetilde{\phi}$ and $\widetilde{\iota}\widetilde{\phi}$, respectively, and applying the discussion above, we obtain the following two words 
\begin{align}
(\widetilde{V}_1 \widetilde{V}_2 \widetilde{V}_c)^2 \widetilde{C^\prime}_5 \widetilde{C}_5 = \partial, \label{liftC} \\
\widetilde{C}_1^2 \widetilde{W}_1 \widetilde{W}_2 \widetilde{W}_c \widetilde{W_1^\prime} \widetilde{W_2^\prime} \widetilde{W_c^\prime} = \partial. \label{liftD}
\end{align}
Therefore, we have 
\begin{align*}
(\widetilde{V}_1 \widetilde{V}_2 \widetilde{V}_c)^2 \widetilde{C^\prime}_5 \widetilde{C}_5 \widetilde{C}_1^2 \widetilde{W}_1 \widetilde{W}_2 \widetilde{W}_c \widetilde{W_1^\prime} \widetilde{W_2^\prime} \widetilde{W_c^\prime} = \partial^2. 
\end{align*}
By applying the lantern relation $\widetilde{C^\prime}_5 \widetilde{C}_5 \widetilde{C}_1^2 = \widetilde{C}_3 \widetilde{C} \widetilde{B}_2$ which is a lift of the lantern relation $C_5^2C_1^2=C_3CB_2$ in $M_2$ to $M_2^1$, we have the following equation
\begin{align*}
(\widetilde{V}_1 \widetilde{V}_2 \widetilde{V}_c)^2 \widetilde{C}_3 \widetilde{C} \widetilde{B}_2 \widetilde{W}_1 \widetilde{W}_2 \widetilde{W}_c \widetilde{W_1^\prime} \widetilde{W_2^\prime} \widetilde{W_c^\prime} = \partial^2. 
\end{align*}
It is easy to check that this is a lift of the relation (\ref{eq2:23}), that is, the Lefschetz fibration in Theorem~\ref{thm} has a sphere section of square $-2$.

Next, we show that the Lefschetz fibration in Theorem~\ref{thm1} admits a sphere section of square $-2$. 
Let us consider the element $\widetilde{\psi} = \widetilde{C}_5 \widetilde{C}_3^{-1} \widetilde{C}_2^{-1} \widetilde{C}_1^{-1}$ and the Dehn twist $\widetilde{C_4^\prime}$ along $\widetilde{c_4^\prime}$ as shown in Figure~\ref{fig:iota3} which are lifts of $\psi$ and $C_4$ in $M_2$ to $M_2^1$. 
Then, we have 
\begin{align*}
&\widetilde{x}_0 := \widetilde{\psi}(\widetilde{\beta}_0) = \widetilde{c}_4,& &\widetilde{x^\prime}_0 := \widetilde{\psi}(\widetilde{\beta^\prime}_0) = \widetilde{c_4}^\prime,&
\end{align*}
and set 
\begin{align*}
&\widetilde{x}_1 := \widetilde{\psi}(\widetilde{\beta}_1),& &\widetilde{x}_2 := \widetilde{\psi}(\widetilde{\beta}_2),& &\widetilde{x}_c := \widetilde{\psi}(\widetilde{c}),& \\
&\widetilde{a} := \widetilde{C}_5^{-1} \widetilde{C}_4^{-1}(\widetilde{\beta}_2)& &\widetilde{b} := \widetilde{C}_5^{-1} \widetilde{C}_4^{-1}(\widetilde{\beta}_2).& &&
\end{align*}
By conjugating the relations (\ref{liftA}) by $\widetilde{\psi}$ and applying the discussion above, we obtain the following word. 
\begin{align}
(\widetilde{X}_1 \widetilde{X}_2 \widetilde{X}_c)^2 \widetilde{C^\prime}_4 \widetilde{C}_4 = \partial. \label{liftE} 
\end{align}
Therefore, we have 
\begin{align}
(\widetilde{X}_1 \widetilde{X}_2 \widetilde{X}_c)^2 \widetilde{C^\prime}_4 \widetilde{C}_4 \widetilde{C}_1^2 \widetilde{W}_1 \widetilde{W}_2 \widetilde{W}_c \widetilde{W_1^\prime} \widetilde{W_2^\prime} \widetilde{W_c^\prime} = \partial^2. \label{liftF}
\end{align}
Here, since it is easily seen that 
\begin{align*}
&\widetilde{C}_5^{-1} \widetilde{C}_4^{-1}(\widetilde{c}_5) = \widetilde{c}_4,& &\widetilde{C}_5^{-1} \widetilde{C}_4^{-1}(\widetilde{c^\prime}_5) = \widetilde{c^\prime}_4,& 
\end{align*}
by conjugating the lantern relation $\widetilde{C^\prime}_4 \widetilde{C}_4 \widetilde{C}_1^2 = \widetilde{C}_3 \widetilde{C} \widetilde{B}_2$ by $\widetilde{C}_5^{-1} \widetilde{C}_4^{-1}$, we obtain the lantern relation 
\begin{align*}
\widetilde{C^\prime}_4 \widetilde{C}_4 \widetilde{C}_1^2 = \widetilde{B} \widetilde{C} \widetilde{A}, 
\end{align*}
which is a lift of the lantern relation $C_4^2 C_1^2 = BCA$ in $M_2$ to $M_2^1$. 
By applying the lantern relation $\widetilde{C^\prime}_4 \widetilde{C}_4 \widetilde{C}_1^2 = \widetilde{B} \widetilde{C} \widetilde{A}$ to the relation (\ref{liftF}), we have the following equation 
\begin{align*}
(\widetilde{X}_1 \widetilde{X}_2 \widetilde{X}_c)^2 \widetilde{B} \widetilde{C} \widetilde{A} \widetilde{W}_1 \widetilde{W}_2 \widetilde{W}_c \widetilde{W_1^\prime} \widetilde{W_2^\prime} \widetilde{W_c^\prime} = \partial^2. 
\end{align*}
It is easy to check that this is a lift of the relation (\ref{eq2:3}), that is, the Lefschetz fibration in Theorem~\ref{thm1} has a sphere section of square $-2$. 

Finally, we show that the Lefschetz fibration in Theorem~\ref{thm2} admits a sphere section of square $-2$. 
Let us consider the element $\widetilde{\phi}_n = \widetilde{C}_1^n \widetilde{C}_4$ and $\widetilde{\psi}_n = \widetilde{C}_1^{n} \widetilde{C}_5^{-1} \widetilde{C}_4^{-1} \widetilde{C}_3^{-1} \widetilde{C}_2^{-1} \widetilde{C}_1^{-1}$ which are lifts of $\phi_n$ and $\psi_n$ in $M_2$ to $M_2^1$, respectively. 
Then, we have 
\begin{align*}
&\widetilde{y}_{c,n} := \widetilde{\phi}_n(\widetilde{c}) = \widetilde{c},& &\widetilde{y}_{c,n} := \widetilde{\phi}_n(\widetilde{c^\prime}) = \widetilde{c^\prime},& \\
\end{align*}
and set 
\begin{align*}
&\widetilde{y}_{0,n} := \widetilde{\phi}_n(\widetilde{\beta}_0),& &\widetilde{y}_{1,n} := \widetilde{\phi}_n(\widetilde{\beta}_1),& &\widetilde{y}_{2,n} := \widetilde{\psi}_n(\widetilde{\beta_2}),& \\
&& &\widetilde{y^\prime}_{1,n} := \widetilde{\phi}_n(\widetilde{\beta^\prime}_1),& &\widetilde{y^\prime}_{2,n} := \widetilde{\psi}_n(\widetilde{\beta^\prime}_2),& \\
&\widetilde{z}_{3,n} := \widetilde{\psi}_n(\widetilde{c}_3),& &\widetilde{z}_{2,n} := \widetilde{\psi}_n(\widetilde{\beta}_2),& &\widetilde{z}_{c,n} := \widetilde{\psi}_n(\widetilde{c}).& 
\end{align*}
By conjugating the relations (\ref{liftA}) by $\widetilde{\phi}_n$ and applying the discussion above, we obtain the following word. 
\begin{align*}
(\widetilde{Y}_{0,n})^2 \widetilde{Y}_{1,n} \widetilde{Y}_{2,n} \widetilde{C} \widetilde{Y^\prime}_{1,n} \widetilde{Y^\prime}_{2,n} \widetilde{C^\prime}  = \partial. 
\end{align*}
Therefore, we have 
\begin{align}
(\widetilde{X}_1 \widetilde{X}_2 \widetilde{X}_c)^2 \widetilde{C^\prime}_4 \widetilde{C}_4 (\widetilde{Y}_{0,n})^2 \widetilde{Y}_{1,n} \widetilde{Y}_{2,n} \widetilde{C} \widetilde{Y^\prime}_{1,n} \widetilde{Y^\prime}_{2,n} \widetilde{C^\prime} = \partial^2. \label{liftH}
\end{align}
Here, since it is easily seen that 
\begin{align*}
&\widetilde{\psi}_n (\widetilde{c}_5) = \widetilde{c}_4,& &\widetilde{\psi}_n (\widetilde{c^\prime}_5) = \widetilde{c^\prime}_4,& &\widetilde{\psi}_n (\widetilde{c}_1) = \widetilde{y}_{0,n},&
\end{align*}
by conjugating the lantern relation $\widetilde{C^\prime}_5 \widetilde{C}_5 \widetilde{C}_1^2 = \widetilde{C}_3 \widetilde{C} \widetilde{B}_2$ by $\widetilde{\psi}_n$, we obtain the lantern relation 
\begin{align*}
\widetilde{C^\prime}_4 \widetilde{C}_4 \widetilde{Y}_{0,n}^2 = \widetilde{Z}_{3,n} \widetilde{Z}_{c,n} \widetilde{Z}_{2,n}, 
\end{align*}
which is a lift of the lantern relation $C_4^2 Y_{0,n}^2 = Z_{3,n}Z_{c,n}Z_{2,n}$ in $M_2$ to $M_2^1$. 
By applying the lantern relation $\widetilde{C^\prime}_4 \widetilde{C}_4 \widetilde{Y}_{0,n}^2 = \widetilde{Z}_{3,n} \widetilde{Z}_{c,n} \widetilde{Z}_{2,n}$ to the relation (\ref{liftH}), we have the following equation 
\begin{align*}
(\widetilde{X}_1 \widetilde{X}_2 \widetilde{X}_c)^2 \widetilde{Z}_{3,n} \widetilde{Z}_{c,n} \widetilde{Z}_{2,n} \widetilde{Y}_{1,n} \widetilde{Y}_{2,n} \widetilde{C} \widetilde{Y^\prime}_{1,n} \widetilde{Y^\prime}_{2,n} \widetilde{C^\prime} = \partial^2. 
\end{align*}
It is easy to check that this is a lift of the relation (\ref{eq2:3}), that is, the Lefschetz fibration in Theorem~\ref{thm2} has a sphere section of square $-2$. 

\section{Fiber sums of Matsumoto's and Xiao's genus two Lefschetz fibrations}\label{twisted}

In this section, using Matsumoto's and Xiao's \cite{Ma, Xiao} genus two Lefschetz fibrations, we construct various examples of genus two Lefschetz fibrations over $\mathbb{S}^2$ via the symplectic fiber sums and Luttinger surgery with the invariants $c_1^{2} = 1, 2$,  $\chi = 1$ and the fundamental groups $1$, $\mathbb{Z} \times \mathbb{Z}$,  $\mathbb{Z}_{n}$,  and $\mathbb{Z} \times \mathbb{Z}_{n}$ (for any integer $n \geq 2$). In the simply connected cases of $c_1^{2} = 1$ and $\chi = 1$, and  $c_1^{2} = 2$ and $\chi = 1$, the total spaces of our Lefschetz fibrations are exotic copies of $\CP\#8\CPb$ and $\CP\#7\CPb$, respectively. As we remarked in the introduction, our construction method and building blocks are similar to the ones given \cite{A, AP1, ABP, AP2, AO, AZ}. These Lefschetz fibrations were promised in the previous version of this article (see Remark~14, pages 14-15, arXiv:GT/1509.01853v2). Similar examples were obtained recently and independently by I. Baykur and M. Korkmaz in \cite{BK}, but also using some of the symplectic building blocks and constructions given in \cite{AP, ABP, AP2}. 

\subsection{Small genus two Lefschetz fibrations with $c_1^{2} = 1$, $\chi = 1$}\label{chi=1}

In this section, we construct various genus two Lefschetz fibrations with the topological invariants $e = 11$ and $\sigma = -7$. Let us recall from subsection~\ref{m} that there is a genus $2$ symplectic surface $\widetilde{\Sigma}_2$ of self-intersection $0$ in $(\mathbb{T}^2\times \mathbb{S}^2)\#4\CPb$, where $\widetilde{\Sigma}_2$ is a regular fiber of Matsumoto's fibration. We will denote the standard generators of $\pi_1(\widetilde{\Sigma}_2)$ and $\pi_1((\mathbb{T}^2\times \mathbb{S}^2)\#4\CPb)\cong \pi_1(\mathbb{T}^2)$ by $\widetilde{a}_i,\widetilde{b}_i$ ($i=1,2$) and $x,y,$ respectively. Using the expressions for the vanishing cycles $\beta_0$ and $\beta_2$ in Section~\ref{m}, we can assume that the inclusion $\widetilde{\Sigma}_2 \hookrightarrow (T^2\times S^2)\#4\CPb$ maps the fundamental group generators as follows:  
\begin{equation}\label{eq: embedding of tilde{Sigma}_2}
\widetilde{a}_1 \mapsto x, \ \ 
\widetilde{b}_1 \mapsto y, \ \
\widetilde{a}_2 \mapsto x^{-1}, \ \ 
\widetilde{b}_2 \mapsto y^{-1}.  
\end{equation}
Moreover, it is easy to see that $\pi_1(((\mathbb{T}^2\times \mathbb{S}^2)\#4\CPb)\setminus\nu\widetilde{\Sigma}_2)\cong \pi_1((\mathbb{T}^2\times \mathbb{S}^2)\#4\CPb)\cong \mathbb{Z}^2$. This follows from the fact that Matsumoto's fibration admits a sphere section and hence the meridian of $\widetilde{\Sigma}_2$ is nullhomotopic in the complement of $\widetilde{\Sigma}_2$. 

Let us now recall from subsection~\ref{x} that there is a genus $2$ symplectic surface $\bar{\Sigma}_2$ of self-intersection $0$ in $S(E,3) = (\mathbb{T}^2\times \mathbb{S}^2)\#3\CPb$, where $\bar{\Sigma}_2$ is a regular fiber of Xiao's genus two fibration. We denote the standard generators of $\pi_1(\bar{\Sigma}_2)$ and $\pi_1((\mathbb{T}^2\times \mathbb{S}^2)\#3\CPb)\cong \pi_1(\mathbb{T}^2)\cong \mathbb{Z}^2$ by $\bar{c}_i,\bar{d}_i$ ($i=1,2$) and $z,t,$ respectively. Without loss of generality, we can assume that one of the nonseparating vanishing cycle of Xiao's fibration can be reperesented by the loop $\gamma = \bar{d_{1}}\bar{d_{2}}$ in $\pi_1(\bar{\Sigma}_2)$. This can be achieved by conjugating the global monodromy of Xiao's fibration by an isotopy class of a carefully chosen diffeomorphism of $\bar{\Sigma_2}$. Thus, we can assume that the following relation holds in $\pi_1((\mathbb{T}^2\times \mathbb{S}^2)\#3\CPb)$: $\bar{d_{1}}\bar{d_{2}}=1$. Here, we can replace the relation $\bar{d_{1}}\bar{d_{2}}=1$ with $\bar{c_{1}}\bar{c_{2}}=1$ if needed. Moreover, since Xiao's fibration admits a sphere section~\ref{x}, it's fundamental group is carried by a regular fiber, and $\pi_1(((\mathbb{T}^2\times \mathbb{S}^2)\#3\CPb)\setminus\nu\bar{\Sigma}_2)\cong \pi_1((\mathbb{T}^2\times \mathbb{S}^2)\#3\CPb)\cong \mathbb{Z}^2$. Since $\pi_1((\mathbb{T}^2\times \mathbb{S}^2)\#3\CPb)$ is a free abelian group of rank two and  $\bar{c_1}$, $\bar{d_1}$, $\bar{c_2}$, $\bar{d_2}$ generate this group, we can assume that it can be generated by $\{\bar{d_1}$, $\bar{c_1}\}$ or $\{\bar{d_1}, \bar{c_2}\}$.  This follows from the basic fact that in a free abelian group any two basis have the same cardinality.

We form the symplectic fiber sums of Matsumoto's fibration and Xiao's fibration along their regular fibers: $Z(\phi)=(\mathbb{T}^2\times \mathbb{S}^2\#4\CPb)\#_{\phi} (\mathbb{T}^2\times \mathbb{S}^2\#3\CPb)$ and $Z(\psi)=(\mathbb{T}^2\times \mathbb{S}^2\#4\CPb)\#_{\psi} (\mathbb{T}^2\times \mathbb{S}^2\#3\CPb)$, where the gluing diffeomorphisms $\psi, \phi: \partial(\nu \widetilde{\Sigma}_2) \rightarrow \partial(\nu \bar{\Sigma}_2)$ maps
the generators of $\pi_1(\widetilde{\Sigma}_2^{\scparallel})$ as follows:  

\begin{equation}\label{eq: embedding of tilde{Sigma}_2}
\widetilde{a}_1^{\scparallel} \mapsto \bar{c}_1^{\scparallel}, \ \ 
\widetilde{b}_1^{\scparallel} \mapsto \bar{d}_1^{\scparallel}, \ \
\widetilde{a}_2^{\scparallel} \mapsto \bar{c}_2^{\scparallel}, \ \ 
\widetilde{b}_2^{\scparallel} \mapsto \bar{d}_2^{\scparallel},
\end{equation}

\begin{equation}\label{eq: embedding of tilde{Sigma}_2}
\widetilde{a}_1^{\scparallel} \widetilde{b}_1^{\scparallel} \mapsto \bar{c}_1^{\scparallel}, \ \ 
\widetilde{a}_1^{\scparallel-1} \mapsto \bar{d}_1^{\scparallel}, \ \
\widetilde{a}_2^{\scparallel} \mapsto \bar{c}_2^{\scparallel}, \ \ 
\widetilde{b}_2^{\scparallel} \mapsto \bar{d}_2^{\scparallel},
\end{equation}

It follows from Seifert-Van Kampen's Theorem that $\pi_1(Z(\phi))$ and $\pi_1(Z(\psi))$ are both abelian groups of rank less than equal $2$. Since the gluing map $\phi$ introduces no new relations, among the generators $c_1$ and $d_1$, the fundamental group of $Z(\phi)$ is $\mathbb{Z} \times \mathbb{Z}$. To compute the fundamental group of $Z(\psi)$, we use the relations $1 = d_1d_2 = a_1b_2$, $a_1a_2 = b_1b_2=1$, we deduce that the fundamental group of is $\mathbb{Z}$. The Euler characteristic and signature of  $Z(\phi)$ and $Z(\psi)$ are given as follows: $e(Z(\phi); Z(\psi)) = e(\mathbb{T}^2\times \mathbb{S}^2\#4\CPb) + e(\mathbb{T}^2\times \mathbb{S}^2\#3\CPb) + 4 = 11$, and $\sigma(Z(\phi); Z(\psi)) = \sigma(\mathbb{T}^2\times \mathbb{S}^2\#4\CPb) +\sigma (\mathbb{T}^2\times \mathbb{S}^2\#3\CPb)  = -7$, and thus $c_1^2(Z(\phi); Z(\psi) = \chi(Z(\phi); Z(\psi)) = 1$. 

Let $\mu$ and $\mu'$ denote two distinct meridians of $\widetilde{\Sigma}_2$, which are nullhomotpic in the complement $(\mathbb{T}^2\times \mathbb{S}^2\#4\CPb) \setminus \nu \bar{\Sigma}_2$. Let $R = \tilde{d_2} \times \mu$ be the ‘rim tori’ in $Z(\psi)$ resulting from the symplectic fiber sum, where $\tilde{d_2}$ is a suitable parallel copy of the generators $d_2$. Observe that this rim torus is Lagrangian, and has a dual rim torus $T = \tilde{c_2} \times \mu'$. We $[R]^2 = [T]^2 = 0$, and $[R] \cdot [T]$ = $1$. One can show that the effect of a Luttinger surgery $(R,  \tilde{d_2}, \pm 1)$ is the same as changing the gluing map that we have used in our symplectic fiber sum above in the construction of $Z(\psi)$. Let us perform the following Luttinger surgery on Lagrangian torus $(R,  \tilde{d_2}, -1/n)$. The Luttinger surgery yields to $d_2^n = 1$. We obtain the following abelian groups of rank less than equal one as the fundamental groups: (a) $\{0\}$ if we set $n = \pm 1$; (b) $\mathbb{Z}_{n}$ if set $n= \pm |n|$ where $|n| \geq 2$. The Lefschetz fibration with  $c_1^{2} = 2$ and $\chi = 1$ and with the fundamental group $\mathbb{Z} \times \mathbb{Z}_{n}$ can be otained by applying a single Luttinger surgery to $Z(\phi)$. We skip the details since they are very similar to the above constructions using $Z(\psi)$.

\subsection{Small Lefschetz fibrations with $c_1^{2} = 2$ and $\chi = 1$}

In this section, we will construct the genus two Lefschetz fibrations with the invariants $c_1^{2} = 2$, $\chi = 1$, and the fundamental groups $1$, $\mathbb{Z} \times \mathbb{Z}$,  $\mathbb{Z}_{n}$, and $\mathbb{Z} \times \mathbb{Z}_{n}$ (for any $n \geq 2$). When the fundamental group is trivial, the total space of our Lefschetz fibration is an exotic copy of $\CP\#7\CPb$. Using the symplectic fiber sum, we will first construct our Lefschetz fibration with the invariants $c_1^{2} = 2$, $\chi = 1$, and the fundamental group $\mathbb{Z} \times \mathbb{Z}$. By applying one or two Luttinger surgeries to the total space of this fibration, we will obtain our Lefschetz fibrations with $c_1^{2} = 2$, $\chi = 1$, and the fundamental groups $1$, $\mathbb{Z}_{n}$,  and $\mathbb{Z} \times \mathbb{Z}_{n}$ (for any $n \geq 2$). We refer the reader to \cite{ABP, AZ} for similar constructions of Lefschetz fibrations with $c_1^{2} = 0$, and $\chi = 1$, which were obtained via Luttinger surgery on fiber sum of two copies of Matsumoto's fibration. The expert reader will observe that the later genus two fibration has the total space $E(1)\#(\mathbb{T}^2\times \mathbb{T}^2)$, and the genus two fibration obtained in \cite{ABP, AZ} via Luttinger surgery decends from this genus two fibration (see also related earlier work in~\cite{FS2}).

We form the symplectic fiber sum of two copies of Xiao's fibration along their regular fibers: $W(\psi)=(\mathbb{T}^2\times \mathbb{S}^2\#3\CPb)\#_{\psi} (\mathbb{T}^2\times \mathbb{S}^2\#3\CPb)$, where the gluing diffeomorphism $\psi: \partial(\nu \widetilde{\Sigma}_2) \rightarrow \partial(\nu \bar{\Sigma}_2)$ maps the generators of $\pi_1(\widetilde{\Sigma}_2^{\scparallel})$ as follows:  

\begin{equation}\label{eq: embedding of tilde{Sigma}_2}
\bar{c}_1^{\scparallel} \mapsto \bar{c}_1'^{\scparallel}, \ \ 
\bar{d}_1^{\scparallel} \mapsto \bar{d}_1'^{\scparallel}, \ \
\bar{c}_2^{\scparallel} \mapsto \bar{c}_2'^{\scparallel}, \ \ 
\bar{d}_2^{\scparallel} \mapsto \bar{d}_2'^{\scparallel},
\end{equation}

Since Xiao's fibration admits $-1$ sphere sections, the genus two fibration on $W(\psi)$ admits $-2$ sphere section. Also, recall from our discussion in Section , we can assume that the relation $d_{1}d_{2}=1$ holds in $\pi_1((\mathbb{T}^2\times \mathbb{S}^2)\#3\CPb)$ for the first fibration, and the relation $c_{1}'c_{2}'=1$ holds in $\pi_1((\mathbb{T}^2\times \mathbb{S}^2)\#3\CPb)$ for the second fibration. Using the existence of $-2$ sphere section, it follows from Seifert-Van Kampen's Theorem that $\pi_1(W(\psi))$ is generated by $c_{1}$, $d_{1}$, $c_{2}$, and $d_{2}$, and the relations $c_{1}c_{2}=1$ and $d_{1}d_{2}=1$ both hold in $\pi_1(Z(\psi))$. Moreover, using the relations
$d_1d_2= c_1c_2=1$, we deduce that the fundamental group of $W(\psi)$ is $\mathbb{Z} \times \mathbb{Z}$. The Euler characteristic and signature of $W(\psi)$ are given as follows: $e(W(\psi)) = e(\mathbb{T}^2\times \mathbb{S}^2\#3\CPb) + e(\mathbb{T}^2\times \mathbb{S}^2\#3\CPb) + 4 = 10$, and $\sigma(W(\psi)) = \sigma(\mathbb{T}^2\times \mathbb{S}^2\#3\CPb) +\sigma (\mathbb{T}^2\times \mathbb{S}^2\#3\CPb)  = -6$, and thus $c_1^2(W(\psi) =2$ and $\chi(W(\psi) = 1$.

Let $\mu$ and $\mu'$ denote two distinct meridians of $\bar{\Sigma}_2$, which are nullhomotpic in the complement $(\mathbb{T}^2\times \mathbb{S}^2\#3\CPb) \setminus \nu \bar{\Sigma}_2$. Let $R_{1} = \tilde{c_1} \times \mu$, and $R_{2} = \tilde{d_2} \times \mu$ be the ‘rim tori’ in $W(\psi)$ resulting from the symplectic fiber sum, where $\tilde{c_1}$ and $\tilde{d_2}$ are suitable parallel copies of the generators $c_1$ and $d_2$. Observe that the rim tori $R_{1}$ and $R_{2}$ are Lagrangian. Moreover, these rim tori have the dual rim tori $T_{1} = \tilde{d_1} \times \mu'$, and $T_{2} = \tilde{c_2} \times \mu'$ in $W(\psi)$. Note that $[R_{1}]^2 = [R_{2}]^2 = [T_1]^2 = [T_2]^2 = 0$, and $[R_1] \cdot [T_1]$ = $1$ = $[R_2] \cdot [T_2]$. One can show that the effect of two Luttinger surgeries $(R_1,  \tilde{c_1}, \pm 1/m)$ and $(R_2, \tilde{d_2}, \pm 1/n)$ is the same as changing the gluing map that we have used in our symplectic fiber sum above in the construction of $W(\psi)$. Let us perform the following two Luttinger surgeries on pairwise disjoint Lagrangian tori $(R_1,  \tilde{c_1}, -1)$ and $(R_2, \tilde{d_2},-1/n)$. Our first Luttinger surgery yields to $c_1^m = 1$ and the second surgery yields to the relation  $d_2^n = 1$. Since $c_1$ and $d_2$ also commutes, we obtain the following abelian groups of rank less than equal one as the fundamental groups: (a) $\{0\}$ if we set $n = \pm 1$, $m = \pm 1$, (b) $\mathbb{Z}_{n}$ if set $m = \pm 1$, and $n= \pm |n|$ where $|n| \geq 2$, and (c) $\mathbb{Z} \times \mathbb{Z}_{n}$ if we set $m=0$ and $n= \pm |n|$ where $|n| \geq 2$. 

\subsection{Small Lefschetz fibrations with $b_2^+ = 3$}

In this section, present the constructions of genus two Lefschetz fibrations over $\mathbb{S}^2$ with $b_2^+ = 3$, $b_2^- = 12, 13, 14, 15$ and with a trivial fundamental group, using the Lefschetz fibration $f_{-1}: X_{\theta_{-1}} \rightarrow \mathbb{S}^2$ obtained in Section~\ref{construction} along with Matsumoto's and Xiao's genus two Lefschetz fibrations. The total spaces of our genus two Lefschetz fibrations are exotic copies of $3\CP\#12\CPb$,  $3\CP\#13\CPb$,  $3\CP\#14\CPb$, and $3\CP\#15\CPb$, respectively. Recall that existence of such exotic symplectic $4$-manifolds are well known (see introduction of \cite{AP1}, and \cite{AP2} for concise history). Some of these exotic $4$-manifolds, but not all, were not known to be a total space of Lefschetz fibration over $\mathbb{S}^2$. Here we give yet another construction, which demonstrates them as the total spaces of genus two Lefschetz fibrations over $\mathbb{S}^2$.

We first present the cases of $b_2^+ = 3$ and $b_2^- = 13, 14$, which are obtained similarly. The total spaces of these Lefschetz fibrations are exotic copies of $3\CP\#13\CPb$ and $3\CP\#14\CPb$. To obtain such Lefschetz fibrations, we form the symplectic fiber sum of the fibration Lefschetz $f_{-1}: X_{\theta_{-1}} \rightarrow \mathbb{S}^2$ with Matsumoto's fibration and Xiao's fibration respectively, along their regular fibers: $V_{14} :=X_{\theta_{-1}}\#_{\Sigma_2} (\mathbb{T}^2\times \mathbb{S}^2\#4\CPb)$ and $V_{13} :=X_{\theta_{-1}}\#_{\Sigma_2} (\mathbb{T}^2\times \mathbb{S}^2\#3\CPb)$, 


Using the facts that all three Lefschetz fibrations admit sphere sections and applying the Seifert - Van Kampen's Theorem, it is straightforward to check the fundamental groups of $V_{14}$ and $V_{13}$ are trivial. Since $V_{14}$ and $V_{13}$ are symplectic 4-manifolds with $b_2^+ = 3$, and $e(V_{14}) = 19$, $\sigma(V_{14}) = -11$, and $e(V_{13}) = 18$, $\sigma(V_{13}) = -10$, using the Freedman's classification theorem~\cite{freedman} for simply-connected 4-manifolds and nontriviality of Seiberg-Witten invariants for symplectic 4-manifolds~\cite{taubes}, we conclude that $V_{13}$ and $V_{14}$ are exotic copies of $3\CP\#13\CPb$, and $3\CP\#14\CPb$ respectively. It follows from Usher's Minimality Theorem in ~\cite{U} that $V_{13}$ and $V_{14}$ are minimal symplectic 4-manifolds.

The genus two Lefschetz fibration with $b_2^+ = 3$, $b_2^- = 15$, and with an exotic minimal total space can be obtained in several ways. For example, one can form the symplectic fiber sum of genus two Lefschetz fibration on the knot surgered elliptic surface $E(1)_K$ \cite{FS2}, where $K$ is the trefoil or figure eight knot, with Matsumoto's genus two fibration on $\mathbb{T}^2\times \mathbb{S}^2\#4\CPb$. Another example was constructed in \cite{AP} applying the lantern relations to the monodromy of a well known genus two Lefschetz fibration on $K3\#2\CPb$. Here we present the third example by taking $3$-fold symplectic fiber sum $V_{15} := (\mathbb{T}^2\times \mathbb{S}^2\#4\CPb)\#_{\phi} (\mathbb{T}^2\times \mathbb{S}^2\#4\CPb)\#_{\psi} (\mathbb{T}^2\times \mathbb{S}^2\#4\CPb) = M\#_{\phi}M\#_{\psi}M$ of Matsumoto's genus two fibration, where our gluing maps $\phi$ and $\psi$ send the generators $a_1$ and $b_1$ of  $\pi_{1}(M) = \ \langle a_{1}, b_{1}, a_{2}, b_{2} \ | \  b_{1}b_{2} = [a_{1}, b_{1}] = [a_{2}, b_{2}] = b_{2}a_{2}{b_{2}}^{-1}a_{1} = 1 \rangle $ to the nonseparating vanishing cycles of Matsumoto's fibration. More precisely, we choose our gluing diffeomorphism $\phi$ such that it maps the nonseparating loop $a_1$ and the dual nonseparating vanishing cycle $\beta_0$ in the first copy to the nonseparating vanishing cycle $\beta_0'$ and the dual nonseparating loop $a_1'$ in the second copy. The existence of such a diffeomorphism is easy to show, and can be found in \cite{FM} (see pages 39-40). Similarly, we choose our gluing map $\psi$ such that it maps the nonseparating loop $b_1'$ in $M\#_{\phi}M$, which survives and generates $\pi_{1}(M\#_{\phi}M)$, to the nonseparating vanishing cycle $\beta_0''$ in the third copy. It is easy to see that $\pi_{1}(V_{15})$ is trivial. Applying Freedman's classification theorem~\cite{freedman} and the fact that $V_{15}$ is symplectic 4-manifold with $b_2^+ = 3$, we conclude that $V_{15}$ is an exotic copy of of $3\CP\#15\CPb$. By Usher's Minimality Theorem in ~\cite{U}, we conclude that that $V_{15}$ is minimal symplectic 4-manifold. 

Our genus two Lefschetz fibration with $b_2^+ = 3$, $b_2^- = 12$, and with an exotic total  space will be obtained by taking $3$-fold symplectic fiber sum $V_{12}$ := $(\mathbb{T}^2\times \mathbb{S}^2\#3\CPb)\#_{\phi} (\mathbb{T}^2\times \mathbb{S}^2\#3\CPb)\#_{\psi} (\mathbb{T}^2\times \mathbb{S}^2\#4\CPb)$ = 
$S\#_{\phi}S\#_{\psi}S$ of Xiao's genus two Lefschetz fibration. Our gluing maps $\phi$ and $\psi$ are identical to that given above. Using the discussion in Section~\ref{chi=1}, we see that the gluing maps $\phi$ and $\psi$ are well defined; one can assume that $\beta_0$ is one of the vanishing cycle of Xiao's fibration by conjugating it's monodromy with the mapping class of a carefully chosen diffeomorphism of $\bar{\Sigma_2}$. Our fundamental group computation for $S\#_{\phi}S\#_{\psi}S$ follows the exact same line of arguments as in the case for $M\#_{\phi}M\#_{\psi}M$. Using Freedman's classification theorem~\cite{freedman}, the fact that $V_{15}$ is symplectic 4-manifold with $b_2^+ = 3$, and Usher's Theorem, we can conclude that $V_{12}$ is an exotic copy of of $3\CP\#12\CPb$, and symplectically minimal.

\subsection{Construction of exotic $\CP\#4\CPb$ and $3\CP\#6\CPb$ using Xiao's genus two fibration}\label{sec:exotic 1-4}

Our construction of exotic $\CP\#4\CPb$ and $3\CP\#6\CPb$ presented here are analogs of the constructions of exotic copies $\CP\#5\CPb$ and $3\CP\#7\CPb$ obtained by the first author in \cite{A}. In \cite{A}, the first author used Matsumoto's genus two Lefschetz fibration on $\mathbb{T}^2\times \mathbb{S}^2\#4\CPb$ along with other symplectic building blocks that he constructed in \cite{A1} to produce the above mentioned small exotic $4$-manifolds (see also follow up work in \cite{AP1, ABP, AP2} where further progress on this problem was made and alternative constructions were presented).

In this section, using Xiao's genus two fibration on $\mathbb{T}^2\times \mathbb{S}^2\#3\CPb$ given in Section~\ref{x}, and the symplectic building blocks in \cite{AP2} (see pages 2-3), we construct an irreducible smooth $4$-manifolds homeomorphic but not diffeomorphic to $\CP\#4\CPb$ and $3\CP\#6\CPb$. Let us recall that exotic irreducible smooth structures on $\CP\#4\CPb$ and $3\CP\#6\CPb$ were already constructed in \cite{AP2} (the reader is refered to the Sections 10 and 12). The family of exotic smooth structures constructed in \cite{AP2}, contains one symplectic and infinitely many non-symplectic exotic copies of $\CP\#4\CPb$ and $3\CP\#6\CPb$. Our construction here is similar, but we have one new ingredient, which is the use of Xiao's fibration. It is not clear if the exotic irreducible symplectic (smooth) 4-manifolds obtained here are diffeomorphic to the exotic irreducible symplectic (smooth) $4$-manifolds constructed in \cite{AP2}; the building blocks and symplectic submanifolds are different. To construct our exotic smooth $\CP\#4\CPb$, we use the smooth $4$-manifolds $Y_{2}(1,m)$ obtained from $\Sigma_2\times \mathbb{T}^2$ by applying four torus surgeries, three of which are Luttinger surgeries (see \cite{AP2}). Let us recall the construction of $Y_{2}(1,m)$ from \cite{AP2}. We fix an integer $m \geq 1$. Assume that $\Sigma_2 \times \mathbb{T}^{2}$ is equipped with a product symplectic form. Let $Y_{2}(1,m)$ denote smooth $4$-manifold obtained by performing the following $4$ torus surgeries on $\Sigma_2\times \mathbb{T}^2$:

\begin{eqnarray}\label{eq: Luttinger surgeries for Y_1(m)} 
&&(a_1' \times c', a_1', -1), \ \ (b_1' \times c'', b_1', -1),\\  \nonumber
&&(a_{2}' \times c', c', +1), \ \ (a_{2}'' \times d', d', +m).
\end{eqnarray}

\noindent where $a_i,b_i$ ($i=1,2$) and $c,d$\/ denote the standard generators of $\pi_1(\Sigma_{2})$ and $\pi_1(\mathbb{T}^2)$, respectively. Since all the torus surgeries listed above are Luttinger surgeries when $m =1$ and the Luttinger surgery preserves minimality, $Y_{2}(1,1)$ is a minimal symplectic $4$-manifold. The fundamental group of $Y_{2}(1,m)$ is generated by $a_i,b_i$ ($i=1,2$) and $c,d$, and the following relations hold in $\pi_1(Y_{2}(1,m))$:

\begin{gather}\label{Luttinger relations for Y_1(m)}
[b_1^{-1},d^{-1}]=a_1,\ \  [a_1^{-1},d]=b_1,\ \
[d^{-1},b_{2}^{-1}]=c,\ \ {[c^{-1},b_{2}]}^{-m}=d,\\ \nonumber
[a_1,c]=1,\ \  [b_1,c]=1,\ \ [a_2,c]=1,\ \  [a_2,d]=1,\\ \nonumber
[a_1,b_1][a_2,b_2]=1,\ \ [c,d]=1.
\end{gather}

Let us denote by $\Sigma'_2 \subset Y_{2}(1,m)$ a genus $2$ surface that desend from the surface $\Sigma_{2}\times\{{\rm pt}\}$ in $\Sigma_{2}\times \mathbb{T}^2$. To construct exotic $\CP\#4\CPb$, we start with this genus 2 surface of self-intersection zero in $\Sigma'_2$ in $Y_{2}(1,m)$. We form the normal connected sum $X(m)=Y_{2}(1,m)\#_{\psi} \mathbb{T}^2\times \mathbb{S}^2\#3\CPb$, where the gluing diffeomorphism $\psi:\partial(\nu {\Sigma'_2})\rightarrow\partial(\nu\bar{\Sigma}_2)$ maps the generators of $\pi_1({\Sigma'_2}^{\scparallel})$ and $\pi_1({\bar{\Sigma_2}}^{\scparallel})$as follows:
\begin{equation}\label{eq:psi mapsto for 1-4}
\hat{a}_1^{\scparallel} \mapsto \bar{c}_1^{\scparallel}, \ \ 
\hat{b}_1^{\scparallel} \mapsto \bar{d}_1^{\scparallel}, \ \
\hat{a}_2^{\scparallel} \mapsto \bar{c}_2^{\scparallel}, \ \ 
\hat{b}_2^{\scparallel} \mapsto \bar{d}_2^{\scparallel}, \ \
\end{equation}

Notice that $X(m)$ is symplectic when $m=1$.  

\begin{lem}\label{lem:X(m)}
The family\/ $\{ X(m)\mid m\geq 1\}$ consists of irreducible\/ smooth $4$-manifolds that all are homeomorphic to $\CP\#4\CPb$. Moreover, this family contains an infinite subfamily consisting of pairwise non-diffeomorphic non-symplectic\/ $4$-manifolds.  
\end{lem}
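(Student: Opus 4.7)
The strategy mirrors the analogous constructions of small exotic $4$-manifolds in \cite{A, AP1, ABP, AP2}: first identify $X(m)$ topologically as $\CP\#4\CPb$ via Freedman's theorem, then distinguish smooth structures by Seiberg--Witten invariants. First I would compute $e$ and $\sigma$. Torus surgeries preserve $e$ and $\sigma$, so $e(Y_2(1,m))=\sigma(Y_2(1,m))=0$; Xiao's fibration has $e=3$ and $\sigma=-3$. The fiber-sum formula along a genus $2$ surface yields $e(X(m))=0+3-2(-2)=7$ and $\sigma(X(m))=-3$, so $b_2^+=1$, $b_2^-=4$, matching $\CP\#4\CPb$.

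Next I would compute $\pi_1(X(m))$ using Seifert--van Kampen. Since both Xiao's fibration and the relevant surface in $Y_2(1,m)$ admit a sphere or sphere-like section (so meridians of both genus two surfaces are nullhomotopic in their complements), $\pi_1(X(m))$ is obtained from $\pi_1(Y_2(1,m))$ by adjoining the relations forced by $\psi$ together with the vanishing cycle relations available in the complement of $\bar{\Sigma}_2$. By suitable conjugation of Xiao's monodromy, one may assume both $\bar{d}_1\bar{d}_2=1$ and $\bar{c}_1\bar{c}_2=1$ hold, and under $\psi$ these translate into $a_1a_2=1$ and $b_1b_2=1$. Combined with the Luttinger-type relations (\ref{Luttinger relations for Y_1(m)}), one derives in turn: $a_2=a_1^{-1}$ and relation $[a_2,d]=1$ give $[a_1,d]=1$, hence $b_1=[a_1^{-1},d]=1$, and then $b_2=1$; then $a_1=[b_1^{-1},d^{-1}]=1$, so $a_2=1$; finally $c=[d^{-1},b_2^{-1}]=1$ and $d=[c^{-1},b_2]^{-m}=1$. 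Thus $\pi_1(X(m))=1$. Combined with odd intersection form (witnessed by the $-1$ sphere classes inherited from the $\#3\CPb$ factors), Freedman's theorem gives $X(m)\approx\CP\#4\CPb$.

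For the smooth statement, when $m=1$ all four torus surgeries are Luttinger, so $Y_2(1,1)$, and hence $X(1)$, is symplectic and, by Usher's minimality theorem (Theorem~\ref{minimality}), minimal. Taubes' theorem then yields nontrivial Seiberg--Witten invariants, contrasting with the (small-perturbation) vanishing of $SW$ for $\CP\#4\CPb$, so $X(1)\not\cong_{\rm diff}\CP\#4\CPb$. Irreducibility follows from nontriviality of $SW$ together with the standard fact that a smooth connected sum $M_1\#M_2$ with $b_2^+(M_i)>0$ on both sides has vanishing $SW$, and the $b_2^+=0$ case is ruled out by the Donaldson-type restrictions on negative-definite intersection forms and minimality.

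For the infinite non-symplectic subfamily, I would apply the Morgan--Mrowka--Szab\'{o} surgery formula together with the gluing formula for $SW$ invariants along the genus two symplectic surface (as used in \cite{A1, AP2, ABP}) to express the relevant $SW$ invariants of $X(m)$ as a polynomial in $m$, with the non-Luttinger coefficient $m$ in the fourth surgery contributing an extra factor of $|m|$. This distinguishes infinitely many $X(m)$ smoothly, and for $m\geq 2$ the set of basic classes violates Taubes' constraints on symplectic minimal $4$-manifolds, excluding any symplectic structure. The main obstacle is precisely this last step: because Xiao's fibration is presented only via a branched cover rather than by an explicit monodromy, one must argue carefully that the gluing formula for $SW$ along $\bar{\Sigma}_2$ applies and that the resulting invariants indeed depend on $m$ in the desired way.
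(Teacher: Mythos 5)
Your overall strategy coincides with the paper's: compute $e=7$, $\sigma=-3$, verify $\pi_1(X(m))=1$, invoke Freedman, and then defer the irreducibility and the infinite non\--symplectic subfamily to the standard Seiberg--Witten/Morgan--Mrowka--Szab\'o arguments of \cite{AP2} (the paper itself simply cites the proof of Lemma~13 there, so your level of detail on that last step is comparable, and your worry about the lack of an explicit monodromy for Xiao's fibration is unfounded --- the gluing arguments only use the smooth/symplectic topology of the pieces, not the monodromy).

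There is, however, one concrete unjustified step in your $\pi_1$ computation. You assume that \emph{both} relations $\bar{c}_1\bar{c}_2=1$ and $\bar{d}_1\bar{d}_2=1$ can be arranged simultaneously on a single copy of Xiao's fibration, so that the gluing map $\psi$ imports both $a_1a_2=1$ and $b_1b_2=1$ into $\pi_1(X(m))$; your entire chain of deductions is launched from $a_2=a_1^{-1}$ combined with $[a_2,d]=1$. But Section~\ref{chi=1} only establishes that \emph{one} nonseparating vanishing cycle can be normalized to $\bar{d}_1\bar{d}_2$ (or, alternatively, to $\bar{c}_1\bar{c}_2$) by conjugating the monodromy --- it is an either/or, and the paper is careful never to use both on the same copy. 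The paper's presentation \eqref{eq: pi_1(X(m))} therefore contains only $b_1b_2=1$; the argument then runs $[b_1,c]=1\Rightarrow[b_2,c]=1\Rightarrow d=[c^{-1},b_2]^{-m}=1\Rightarrow a_1=b_1=c=b_2=1$, and kills $a_2$ by a different device: since $\pi_1((\mathbb{T}^2\times\mathbb{S}^2)\#3\CPb)\cong\mathbb{Z}^2$ is generated by $\bar{c}_1,\bar{d}_1$, one writes $\bar{c}_2=\bar{c}_1^{k}\bar{d}_1^{l}$, whence $a_2=a_1^{k}b_1^{l}=1$. Your conclusion is correct and your argument is repairable by substituting this chain, but as written the first move rests on a relation you have not established. (A smaller inaccuracy: the meridian of $\Sigma_2'$ is not nullhomotopic in $Y_2(1,m)\setminus\nu\Sigma_2'$ by virtue of any section on that side; it dies only because it is identified with the meridian of $\bar{\Sigma}_2$, which bounds in the Xiao complement thanks to the sphere section there. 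Only one side needs this, so the computation is unaffected.)
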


\begin{proof}
We compute that 
\begin{eqnarray*}
e(X(m)) &=& e(Y_{2}(1,m)) + e(\mathbb{T}^2\times \mathbb{S}^2\#3\CPb) -2 e(\Sigma_{2}) = 0 + 3 + 4 = 7,\\
\sigma(X(m)) &=& \sigma (Y_{2}(1,m)) + \sigma(\mathbb{T}^2\times \mathbb{S}^2\#3\CPb) = 0 + (-3) = -3.
\end{eqnarray*}
Freedman's theorem (cf.\ \cite{freedman}) implies that $X(m)$ is homeomorphic to $\CP\# 4\CPb$, once we verify $\pi_1(X(m))=1$. Applying Seifert-Van Kampen's theorem, we deduce that $\pi_1(X(m))$ is a quotient of the following finitely presented group:  
\begin{eqnarray}\label{eq: pi_1(X(m))}
\hspace{15pt}
\langle
a_1,b_1,a_2,b_2, c, d
&\mid& a_1=[b_1^{-1},d^{-1}],\, b_1=[a_1^{-1},d],\\ \nonumber
&\mid& c=[b_2^{-1}, d^{-1}],\, d=[c^{-1},b_2]^{-m},\\ \nonumber
&&[a_1,c]=[b_1,c]=[a_2,c]=[a_2,d] =1,\\ \nonumber
&&[a_1,b_1][a_2,b_2]=1,\, [c,d]=1, b_1b_2=1 \nonumber \rangle
\end{eqnarray}
Using the relations $b_1b_2 = 1$, $d=[c^{-1},b_2]^{-m}$,  and $[b_1,c] =1$, it is easy to see that $d=1$ in $\pi_1(X(m))$, which implies that the generators $a_1 = b_1 = c = 1$ of (\ref{eq: pi_1(X(m))}) are also trivial in $\pi_1(X(m))$.  To show $a_2 = 1$, we recall from \ref{chi=1} that $\pi_1((\mathbb{T}^2\times \mathbb{S}^2)\#3\CPb)$ is a free abelian group of rank two and $\bar{c_1}$, $\bar{d_1}$ generate this group, thus one can express the element $\bar{c_2}$ as  $\bar{c_1}^{k}\bar{d_1}^{l}$ in this group, for some integers $k$ and $l$. Using the gluing map $\psi$ above, we identify $a_2 = c_2 = \bar{c_1}^{k}\bar{d_1}^{l} = a_{1}^{k}b_{1}^{l}$. Thus $a_2 = 1$ in $\pi_1(X(m))$ as well. It follows that $\pi_1(X(m))=1$.   The same arguments as in the proof of Lemma~13 in \cite{AP2} show that $X(m)$ are all irreducible and infinitely many of them are non-symplectic and pairwise non-diffeomorphic.  
\end{proof}

\begin{lem}\label{lem:3-k}
There exist an irreducible symplectic\/ $4$-manifold and an infinite family of pairwise non-diffeomorphic irreducible non-symplectic\/ $4$-manifolds that are all homeomorphic to\/ $3\CP\#6\CPb$.
\end{lem}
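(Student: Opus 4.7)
The plan is to adapt the construction of $X(m)$ in the proof of Lemma~\ref{lem:X(m)}, replacing the building block $Y_2(1,m)$ with a larger symplectic $4$-manifold $W(m)$ whose Euler characteristic and signature, after a symplectic fiber sum with Xiao's fibration, yield those of $3\CP\#6\CPb$. A natural candidate is to take $W(m)$ to be $\Sigma_2\times\Sigma_2$ after a carefully chosen family of torus surgeries (three Luttinger and one $(1/m)$-surgery, in analogy with~\eqref{eq: Luttinger surgeries for Y_1(m)}): the product $\Sigma_2\times\Sigma_2$ already has $e=4$ and $\sigma=0$ and contains the genus~$2$ symplectic surface $\Sigma_2\times\{{\rm pt}\}$ of self-intersection zero. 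One then forms the symplectic fiber sum
\begin{equation*}
Z(m):=W(m)\#_{\psi}\bigl(\mathbb{T}^2\times\mathbb{S}^2\#3\CPb\bigr)
\end{equation*}
along this surface and a regular fiber $\bar\Sigma_2$ of Xiao's fibration, with a gluing diffeomorphism $\psi$ identifying the generators of the fiber groups analogously to~\eqref{eq:psi mapsto for 1-4}.

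A direct computation using additivity of $e(A\#_\Sigma B)=e(A)+e(B)-2e(\Sigma)$ and of signatures gives $e(Z(m))=11$ and $\sigma(Z(m))=-3$. The Seifert--Van Kampen computation then proceeds as in the proof of Lemma~\ref{lem:X(m)}: the Luttinger relations on $W(m)$, the identifications imposed by $\psi$, and the fact that Xiao's fibration admits a sphere section (so $\pi_1((\mathbb{T}^2\times\mathbb{S}^2\#3\CPb)\setminus\nu\bar\Sigma_2)\cong\mathbb{Z}^2$) will combine to kill all the generators and yield $\pi_1(Z(m))=1$. Freedman's theorem~\cite{freedman} will then give $Z(m)\approx 3\CP\#6\CPb$.

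For the symplectic representative, take $m=1$: every surgery used is Luttinger, so $W(1)$ is minimal symplectic, and the symplectic fiber sum preserves the symplectic structure; Theorem~\ref{minimality} (case~(iii)) shows $Z(1)$ is minimal because neither factor contains a symplectic $-1$-sphere disjoint from the gluing surface. Non-triviality of Taubes' Seiberg--Witten invariants~\cite{taubes} for $Z(1)$ is then incompatible with the vanishing of SW for the standard $3\CP\#6\CPb$, so $Z(1)$ is irreducible and exotic. For $m\geq 2$ the $(1/m)$-surgery need not be symplectic, but the Seiberg--Witten basic class set varies with $m$; running the same line of argument as in the proof of Lemma~13 of~\cite{AP2} will produce the infinite subfamily of pairwise non-diffeomorphic non-symplectic irreducible $4$-manifolds homeomorphic to $3\CP\#6\CPb$.

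The main obstacle will be the $\pi_1$-computation for the larger building block $W(m)$: since $\pi_1(\Sigma_2\times\Sigma_2)$ is substantially more complicated than $\pi_1(\Sigma_2\times\mathbb{T}^2)$, the surgery curves in $W(m)$ must be chosen so that the resulting relations, together with those induced from Xiao's piece, fully trivialize the fundamental group. A secondary subtlety is the verification of minimality from the symplectic sum decomposition, which requires ruling out exceptional spheres in each factor disjoint from the gluing surface; this is arranged by using a minimal model of $W(1)$ and by recalling that Xiao's building block $(\mathbb{T}^2\times\mathbb{S}^2)\#3\CPb$ has all its $(-1)$-spheres intersecting the genus~$2$ fiber nontrivially.
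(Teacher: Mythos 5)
Your instinct on the numerics is sound: for $B\#_{\bar\Sigma_2}(\mathbb{T}^2\times\mathbb{S}^2\#3\CPb)$ to have $(e,\sigma)=(11,-3)$ the block $B$ must satisfy $(e(B),\sigma(B))=(4,0)$, which $\Sigma_2\times\Sigma_2$ and any torus-surgered copy of it does. (This bookkeeping is in fact more careful than the paper's own argument, which fiber sums the exotic $\CP\#4\CPb$ of Lemma~\ref{lem:X(m)} with Xiao's fibration; the same additivity formulas give that sum $e=7+3+4=14$ and $\sigma=-6$, the numbers of $3\CP\#9\CPb$ rather than $3\CP\#6\CPb$.) However, your construction has a gap that no clever choice of surgery curves can repair: four torus surgeries on $\Sigma_2\times\Sigma_2$ cannot yield a simply connected fiber sum. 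Indeed $b_1(\Sigma_2\times\Sigma_2)=8$, and each torus surgery lowers $b_1$ by at most one, since in first homology it imposes only the single relation $q[\gamma]+p[\mu_T]=0$; hence $b_1(W(m))\ge 4$ and $\operatorname{rank}H_1(W(m)\setminus\nu\Sigma_2')\ge 4$. In the Mayer--Vietoris sequence for $Z(m)$ the Xiao complement contributes $H_1\cong\mathbb{Z}^2$ and the gluing region $\Sigma_2\times S^1$ has $H_1$ of rank $5$, so $b_1(Z(m))\ge 4+2-5=1>0$ (and in fact $\ge 2$, since the meridian is nullhomologous on both sides). So $Z(m)$ is never homeomorphic to $3\CP\#6\CPb$, and the issue you defer as ``the main obstacle'' is actually an obstruction.

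To salvage your route you would need at least six, and in the constructions of \cite{AP2} eight, Luttinger surgeries along disjoint Lagrangian tori in $\Sigma_2\times\Sigma_2$ away from $\Sigma_2\times\{{\rm pt}\}$, together with an explicit presentation check that these relations and the two coming from Xiao's side kill all eight generators of $\pi_1$; that verification is the real content of the lemma and cannot be left as an analogy with \eqref{eq: Luttinger surgeries for Y_1(m)}. The paper takes a different and shorter route: it introduces no new block, but fiber sums the exotic $\CP\#4\CPb=X(m)$ already built in Lemma~\ref{lem:X(m)}, which contains a square-zero genus two surface, with Xiao's fibration along that surface; triviality of $\pi_1$ is then immediate because $\pi_1$ of Xiao's block is carried by the fiber and the fiber classes already die in $X(m)$. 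Whichever route one follows, both the characteristic-number count and the first-homology count must close, and in your version the second one does not.
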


\begin{proof} Recall that such an infinite family of exotic $3\CP\#6\CPb$'s was already constructed by the first author and D. Park in \cite{AP2} (see Section~12). The existence of exotic $3\CP\#6\CPb$ immediately follows from the existence of exotic $\CP\#4\CPb$ containing a square $0$ genus two surface. Since the exotic $\CP\#4\CPb$ constructed above contains such a genus two surface, by normally connect-summing our exotic $\CP\#4\CPb$
with Xiao's genus two fibration on $\mathbb{T}^2 \times \mathbb{S}^2 \#3\CPb$ along it's regular fiber $\bar{\Sigma}_2$ yields to an exotic $3\CP\#6\CPb$. Notice that the fundamental group of Xiao's fibration is carried by its regular fiber and the fibration admits a sphere section. Hence, any meridian of a regular fiber is nullhomotopic in the complement of a regular fiber, and thus a triviality of the fundamental group is straightforward to check by Seifert-Van Kampen's theorem. The remaining details can be filled in as in the previous proofs.  
\end{proof}

 \section*{Acknowledgments} The authors would like to thank Yoshihisa Sato and Susumu Hirose for comments on our work. A. Akhmedov was partially supported by NSF grants, DMS-1005741, Sloan Research Fellowship, and  Guillermo Borja award from the University of Minnesota. N. Monden was partially supported by Grant-in-Aid for Young Scientists (B) (No. 13276356), Japan Society for the Promotion of Science.

\end{document}